\newlength{\standardunitlength}
\newcommand{\bea}{\begin{eqnarray}}
\newcommand{\ena}{\end{eqnarray}}
\newcommand{\beas}{\begin{eqnarray*}}
\newcommand{\enas}{\end{eqnarray*}}
\newcommand{\ignore}[1]{}
\newtheorem{prop}{Proposition}[section]
\newtheorem{lemma}[prop]{Lemma}
\newtheorem{cor}[prop]{Corollary}
\newtheorem{theorem}[prop]{Theorem}
\begin{document}

\title [Derangements in finite classical groups] {Derangements in finite classical groups and characteristic polynomials of random matrices}

\author{Jason Fulman}
\address{Department of Mathematics\\
        University of Southern California\\
        Los Angeles, CA, 90089-2532, USA}
\email{fulman@usc.edu}

\author{Robert Guralnick}
\address{Department of Mathematics\\University of Southern California \\ Los Angeles, CA, 90089-2532, USA}
\email{guralnic@usc.edu}

\date{Submitted August 8, 2025; Revised January 27, 2026} 

\thanks{Fulman was partially supported by Simons Foundation grants 400528 and 917224.  Guralnick was partially supported by NSF grant DMS 1901595 and Simons Foundation Fellowship 609771. We thank the referee for a careful reading of the manuscript and helpful remarks. }

\begin{abstract}   We first obtain explicit upper bounds for the proportion of elements in a finite classical group $G$ with a given characteristic polynomial.   We use this to complete the proof that the proportion of elements of a finite classical group $G$ which lie in a proper irreducible
subgroup tends to $0$ as  the dimension of the natural module goes to $\infty$. 
This result is analogous to the result of Luczak and Pyber \cite{LP} that the proportion of elements of the symmetric group $S_n$
which are contained in a proper transitive subgroup other than the alternating
group goes to $0$ as $n \rightarrow \infty$.   We also show that the probability that $3$ random elements of $SL(n,q)$ invariably
generate goes to $0$ as $n \rightarrow \infty$.     
\end{abstract}

\maketitle

\section{Introduction}

Let $G$ be a classical group over a finite field.    We first study the number of elements
of $g$ with a fixed characteristic polynomial and in particular obtain explicit upper bounds. For example, we prove the following result:

\begin{theorem} \label{t:charpoly} For any polynomial $\phi$, 
the chance that a random element of $GL(n,q)$ has
characteristic polynomial $\phi$ is at most \[ \frac{e^6 [1+\log_q(n+1)]^6}{q^n} .\]
\end{theorem}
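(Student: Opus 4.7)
The plan is to parametrize conjugacy classes of $GL(n,q)$ with characteristic polynomial $\phi$, evaluate the resulting class sum in closed form via the count of unipotent elements, and then bound the residual correction factor. Writing $\phi = \prod_i \phi_i^{a_i}$ with $\phi_i$ distinct monic irreducibles of degree $d_i$, Wall's parametrization says conjugacy classes with characteristic polynomial $\phi$ are indexed by tuples $(\lambda^{(i)})$ of partitions with $\lambda^{(i)} \vdash a_i$, and a representative has centralizer of order $\prod_i c_{q^{d_i}}(\lambda^{(i)})$, where
\[
c_Q(\lambda) = Q^{\sum_j (\lambda'_j)^2}\prod_{j\geq 1}\prod_{k=1}^{m_j(\lambda)}(1-Q^{-k}).
\]
Summing $1/|C(g)|$ over these classes,
\[
\pp(\text{char poly}=\phi) \;=\; \prod_i \sum_{\lambda \vdash a_i}\frac{1}{c_{q^{d_i}}(\lambda)}.
\]

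Next, I would observe that $c_Q(\lambda)$ is exactly the centralizer order in $GL(m,\mathbb{F}_Q)$ of a unipotent element of Jordan type $\lambda$, so $\sum_{\lambda \vdash m} 1/c_Q(\lambda)$ equals the proportion of unipotents in $GL(m,\mathbb{F}_Q)$. Steinberg's theorem counts $Q^{m(m-1)}$ unipotents, and dividing by $|GL(m,\mathbb{F}_Q)| = Q^{m^2}\prod_{k=1}^m(1-Q^{-k})$ yields
\[
\sum_{\lambda\vdash m}\frac{1}{c_Q(\lambda)} \;=\; \frac{Q^{-m}}{\prod_{k=1}^m(1-Q^{-k})}.
\]
Substituting and using $\sum_i d_i a_i = n$ gives the closed form
\[
\pp(\text{char poly}=\phi) \;=\; \frac{1}{q^n}\,\prod_i \prod_{k=1}^{a_i}\frac{1}{1-q^{-d_ik}}.
\]

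The final step is to bound the double product. Taking logarithms and using $-\log(1-x) \leq 2x$ for $x \leq 1/2$ (valid since $q^{-d_ik}\leq 1/q \leq 1/2$),
\[
\log\prod_i\prod_{k=1}^{a_i}\frac{1}{1-q^{-d_ik}} \;\leq\; 2\sum_i\sum_{k\geq 1} q^{-d_ik} \;\leq\; 4\sum_i q^{-d_i} \;=\; 4\sum_d N_d\, q^{-d},
\]
where $N_d$ counts degree-$d$ irreducible factors of $\phi$. Using both $N_d \leq n/d$ (from $\deg\phi = n$) and $N_d \leq q^d/d$ (total monic irreducibles of degree $d$ over $\mathbb{F}_q$), and splitting the sum at $d_0 = \lfloor\log_q(n+1)\rfloor$ with the second bound for $d \leq d_0$ and the first for $d > d_0$, I obtain $\sum_d N_d q^{-d} = O(\log(1+\log_q(n+1)))$; exponentiating gives a bound of the form $e^C(1+\log_q(n+1))^C$.

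The main obstacle is matching the specific constants $e^6$ and exponent $6$: the crude inequalities above yield the correct functional form with unspecified constants, so landing on $C=6$ requires sharper versions of the key estimates (a tighter constant in $-\log(1-x)\leq Cx$ on the relevant range, and a finer split in the last step). The algebraic identities (Wall's parametrization and Steinberg's unipotent count) are classical, so all genuine work lies in this final optimization.
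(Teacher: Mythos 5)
Your derivation of the exact formula is correct and is in substance identical to the paper's starting point: the paper cites Reiner for $\prod_i q^{m(\phi_i)j_i(j_i-1)}/|GL(j_i,q^{m(\phi_i)})|$, which after rewriting is exactly your $\frac{1}{q^n}\prod_i\prod_{k=1}^{a_i}(1-q^{-d_ik})^{-1}$ (this is \eqref{starred} in the paper); your Wall-plus-Steinberg route is a standard equivalent derivation. The bounding philosophy is also the same: reduce to a sum of $q^{-d}$ over the degrees of the distinct irreducible factors and control it with counts of irreducible polynomials.

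The gap is the one you flag yourself, and it is genuine: carried out, your estimates give roughly $e^{12}[1+\log_q(n+1)]^{4}/q^n$ (a factor $e^{4}$ per distinct irreducible degree from the geometric-series doubling, plus an additive tail of about $2$ from the range $d>d_0$), and this does not imply the stated bound whenever $1+\log_q(n+1)<e^{3}$, i.e.\ for all $n+1<q^{e^{3}-1}$, which at $q=2$ covers every $n$ below roughly $5\times 10^{5}$. So the theorem as literally stated is not established. The paper closes this in two places. First, each factor is bounded by $\prod_{k\geq 1}(1-q^{-ak})^{-1}\leq(1-q^{-a}-q^{-2a})^{-1}\leq 1+6q^{-a}\leq e^{6/q^a}$, the first inequality via Euler's pentagonal number theorem; this replaces your two-step estimate (with its extra factor of $2$) by the single constant $6$. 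Second, instead of splitting $\sum_d N_d q^{-d}$ using $N_d\leq\min(n/d,\,q^d/d)$, which is what produces the additive tail and hence the $e^{12}$, the paper notes that $\exp(6\sum_i q^{-m(\phi_i)})$ is maximized over admissible degree multisets by greedily taking every irreducible of degree $1,2,\dots,r$ with multiplicity one, where $r$ is minimal with $\sum_{d=1}^{r}dN(q;d)\geq n$; since $\sum_{d\mid r}dN(q;d)=q^r-1$ one may take $r=1+\log_q(n+1)$, and then $N(q;d)\leq q^d/d$ yields $\exp(6\sum_{d=1}^{r}1/d)\leq e^{6(1+\log r)}=e^6r^6$ with no tail term. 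Substituting these two steps into your argument recovers the paper's proof exactly.
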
 

We prove similar results for all the classical groups.   See Section \ref{charpoly}.   One can obtain
exact formulas for these numbers (and some were known) but this does not always lead to easily computed bounds which are more useful. 
In addition to the applications in this paper, our results might be useful in the context of \cite{GR}. 
Our results are different from but consistent with the findings of \cite{HS}, who showed that except for small degree factors, the characteristic polynomial of a random element of $GL(n,q)$ is like a random polynomial. 

We had obtained similar results for conjugacy classes in \cite{FG1} and the bounds there have proved to be tremendously useful.  Of course,
the upper bound on characteristic polynomials also gives a bound on conjugacy class sizes (since conjugate elements have the same characteristic polynomial)
but the bounds in \cite{FG1} are better.  

Section \ref{extension} studies the proportion of derangements for a finite classical group acting on the cosets
of an extension field subgroup.  
For instance it shows that for any prime $b$ dividing $n$, the proportion of elements of $GL(n,q)$
contained in a conjugate of $GL(n/b,q^b).b$ tends to $0$ as $n \rightarrow \infty$. The $.b$
notation means semidirect product with the cyclic group of order $b$ generated by the map $x \mapsto x^q$
on $\mathbb{F}_{q^b}^*$, the non-zero elements of a finite field of size $q^b$.   Thus the proportion of derangements goes to 
$1$ as $n \rightarrow \infty$ and indeed the same is true for the proportion of elements which are derangements on all extension
field subgroups.  

Combining the results of Section \ref{extension} with our results in \cite{FG1, FG2} (which treated all the other classes of irreducible subgroups)
 completes the proof that for a finite classical group $G$ of rank $r$, the proportion
of elements lying in a proper irreducible subgroup tends to $0$ as $r \rightarrow \infty$. This was proved by
Shalev \cite{Sh} for $G=GL(n,q)$ almost 30 years ago (for $q$ fixed).  Shalev used difficult results of Schmutz \cite{Sc} on the order
of random elements of $GL(n,q)$. Unfortunately Schmutz's work seems very difficult to extend to the other
finite classical groups. A version of Shalev's result for finite classical groups was proved for $q  \rightarrow \infty$ in \cite{FG2}.  If the dimension
of the natural module is fixed,  this fails. Any subgroup containing a maximal torus will contain
a proportion of elements of $G$ bounded away from $0$ for fixed $n$. 

 Our approach has two parts.
The first part is to show (in Section \ref{charpoly}) that not too many elements of a finite classical group can
have a given characteristic polynomial. The second part is to prove (in Section \ref{extension}) that the possible
characteristic polynomials of elements in maximal irreducible subgroups are quite restricted. Our original approach
to bounding the number of characteristic polynomials with certain restrictions in Section \ref{extension} used
generating functions. Daniele Garzoni pointed out to us that some of these lemmas were known by other methods
\cite{EG}, so we have removed our generating function approach except for the case of $GL(n,q)$. For $GL(n,q)$,
Lemma \ref{boundpolyGL} and the corresponding result from \cite{EG} are exactly the same, and we thought the
reader might enjoy seeing our generating function approach. We also note that the methods of our paper could be used
to give an alternate proof of some of the results in \cite{FG1, FG2}.  

This gives the following result:

\begin{theorem} \label{t:main2}  Let $G$ be a finite classical group with a natural module of dimension $n$.
Let $X$ be the set of all maximal subgroups of $G$ which act irreducibly on the natural module and
which do not contain the derived subgroup of $G$.  If $G$ is a symplectic group in characteristic $2$,
we exclude the orthogonal subgroups. Then as $n \rightarrow \infty$, 
$| \cup_{H \in X} H |/|G| \rightarrow 0$.
\end{theorem}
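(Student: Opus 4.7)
The plan is to apply Aschbacher's classification to partition the set $X$ of irreducible maximal subgroups into finitely many geometric families, namely the Aschbacher classes $\mathcal{C}_2, \mathcal{C}_3, \ldots, \mathcal{C}_8$ together with the almost-simple irreducible class $\mathcal{S}$ (the reducible class $\mathcal{C}_1$ is excluded from $X$ by hypothesis). Since the number of families is bounded independent of $n$, a union bound over families reduces the theorem to showing, for each family $\mathcal{F}$ separately, that $|\bigcup_{H \in \mathcal{F} \cap X} H|/|G| \to 0$ as $n \to \infty$.

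For the families $\mathcal{C}_2, \mathcal{C}_4, \mathcal{C}_5, \mathcal{C}_6, \mathcal{C}_7, \mathcal{C}_8$, and $\mathcal{S}$, I would invoke the bounds established in \cite{FG1, FG2}. The hypothesis in the statement that orthogonal subgroups of a symplectic group in characteristic $2$ are excluded enters precisely here, reflecting that this exceptional embedding is genuinely intermediate and so must be excluded from any conclusion of this shape.

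The remaining case, and the new content of the paper, is the class $\mathcal{C}_3$ of extension field subgroups. Here the two-part strategy signaled in the introduction applies. First, for each prime $b$ dividing $n$ (or the analogous parameter for non-linear classical groups), fix a representative $M_b$ of the $\mathcal{C}_3$ class; in the linear case $M_b \cong GL(n/b, q^b).b$. Because elements of $M_b$ act $\mathbb{F}_{q^b}$-semilinearly twisted by a power of Frobenius, their $\mathbb{F}_q$-characteristic polynomials are severely constrained: on the linear coset they are norms (from $\mathbb{F}_{q^b}[t]$ to $\mathbb{F}_q[t]$) of monic polynomials of degree $n/b$, and on the other cosets they are twisted analogues. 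This forces the set $\Phi_b \subset \mathbb{F}_q[t]$ of such characteristic polynomials to be drastically smaller than the total of $q^n$ monic degree-$n$ polynomials. Second, applying Theorem \ref{t:charpoly} polynomial by polynomial yields
\[ \frac{|\bigcup_{g \in G} M_b^g|}{|G|} \le \frac{|\{h \in G : \mathrm{char.poly}(h) \in \Phi_b\}|}{|G|} \le \frac{|\Phi_b| \, e^6 \, [1 + \log_q(n+1)]^6}{q^n}, \]
and the argument concludes by checking that $|\Phi_b|/q^n$ beats the polylogarithmic factor and summing the resulting bound over the $O(\log n)$ prime divisors $b$ of $n$.

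The main obstacle is this last step for the smallest prime divisor of $n$, typically $b = 2$, where $|\Phi_b|/q^n$ decays most slowly yet must still defeat $[1+\log_q(n+1)]^6$. This tension is precisely what forces Theorem \ref{t:charpoly} to be established with the polylogarithmic overhead made fully explicit. A secondary subtlety is that for unitary, symplectic, and orthogonal groups the extension field subgroups and the corresponding restrictions on characteristic polynomials (for instance, self-conjugate polynomials in the unitary setting, or polynomials respecting the invariant form for the isometry groups) take slightly different shapes and must be handled case by case; this is the bulk of the work in Section \ref{extension}.
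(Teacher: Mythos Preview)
Your proposal is correct and follows essentially the same approach as the paper: reduce via Aschbacher's theorem and \cite{FG1,FG2} to the extension field class $\mathcal{C}_3$, then for each prime $b$ bound the number of admissible characteristic polynomials (the paper phrases the constraint as ``every irreducible factor has degree a multiple of $b$ or multiplicity a multiple of $b$,'' obtaining $|\Phi_b| \le Bq^n/n^{1/2}$ in the linear case) and multiply by the per-polynomial bound of Theorem~\ref{t:charpoly}. The one technical difference is that the paper does not push the characteristic-polynomial argument through the outer cosets of $GL(n/b,q^b)$ in $GL(n/b,q^b).b$; instead it bounds those cosets directly by (number of classes)/(minimum centralizer size) using Shintani descent results already in \cite{FG2}, which gives a much stronger decay and sidesteps describing the twisted norms.
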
 

By classical group, we generally mean a linear, unitary, symplectic or orthogonal group.  More precisely, we are proving
the above results for $GL(n,q)$, $GU(n,q)$, $Sp(2n,q)$ and $O^{\pm}(n,q)$.  Note that the results for $GL(n,q)$ and $GU(n,q)$ imply
the result for $SL(n,q)$ and $SU(n,q)$ when $q$ is bounded since then the index is at most $q$.   Similarly, the result for the orthogonal
group implies the result for the special orthogonal group since the index is $2$. 
For $q \rightarrow \infty$, the result was already proved in \cite{FG2}.

Recall that elements $g_1, \ldots, g_r$ are said to invariably generate $G$ if for any $h_i$ conjugate to $g_i$, 
we have that $G = \langle h_1, \ldots, h_r \rangle$.   Note a consequence of the previous result is that if $G$ is a finite classical
simple group of dimension $n$, the probability that $r$ random elements of $G$ invariably generate $G$ is asymptotically
close to the probability that $r$ random elements generate an irreducible subgroup.

In a follow-up  preprint \cite{FG3}, we gave another application of these results. Let $P_x(G)$ be the probability that
$x$ and a random element of $G$ generate $G$. We show that  there is an explicit universal
constant $\delta > 0$ so that for $n$ sufficiently large and $x$ any non-identity element of $G$, $P_x(G) > \delta$
(if $q \rightarrow \infty$, then $P_x(G) \rightarrow 1$ \cite{GLSS}).   This fails for alternating groups. 
In \cite{FGG}, we use different methods to prove an extension of this
to the case of almost simple groups (and also for elements in a given coset of the simple group).

In the penultimate section, we give an elementary discussion of the case of bounded rank and the field size increasing
and also work out the lower bound in the case of type A groups.  In the final section we generalize a result of McKemmie \cite{Mc}
for the special case of $SL(n,q)$.  In some sense this is unrelated to the previous result.  However, the proofs both rely on counting
unipotent elements commuting with fixed a semisimple element.    

\begin{theorem}  The probability that $3$ random elements of $SL(n,q)$ invariably generate an irreducible subgroup goes to $0$ as $n \rightarrow \infty$
(independently of $q$).
\end{theorem}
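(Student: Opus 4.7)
Three elements $g_1, g_2, g_3 \in SL(n,q)$ fail to invariably generate an irreducible subgroup iff there exist conjugates $g_i^{h_i}$ fixing a common proper nonzero subspace.  Since $SL(n,q)$ is transitive on $k$-dimensional subspaces for $1 \le k \le n-1$, this occurs iff for some such $k$ each $g_i$ individually stabilizes some $k$-dimensional subspace.  A $g \in GL(n,q)$ has an invariant $k$-subspace iff its characteristic polynomial $\chi_g$ admits a monic divisor of degree $k$ over $\mathbb{F}_q$ (split $V$ into its primary components and take, in each component, a submodule of the appropriate dimension, a multiple of the corresponding irreducible degree).  Setting $D(g) := \{\deg f : f \in \mathbb{F}_q[x]\text{ monic},\, f \mid \chi_g\}$, the theorem is equivalent to
\[
\pp\bigl(D(g_1) \cap D(g_2) \cap D(g_3) \cap \{1,\ldots, n-1\} \neq \emptyset\bigr) \longrightarrow 1
\]
as $n \to \infty$, uniformly in $q$.

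\textbf{Strategy: pass to a random-polynomial model and adapt McKemmie.}  The plan is to use the cycle index for $SL(n,q)$ (as in the first author's earlier work and consistent with the HS equidistribution mentioned in the introduction) to model the factorization of $\chi_g$ by that of a uniformly random monic polynomial of degree $n$ over $\mathbb{F}_q$, subject to the constant-term condition coming from $\det g = 1$.  In this model, for each fixed $d$ the number of degree-$d$ irreducible factors of $\chi_g$ is asymptotically Poisson with parameter $\lambda_d(q)$ bounded above and below uniformly in $q$, and the counts for different $d$ are asymptotically independent.  The problem then becomes whether three independent such ``Poissonian'' cycle structures share a common nontrivial subset-sum with probability tending to $1$, which is essentially the permutation statement McKemmie \cite{Mc} proved.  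The plan is to transport her moment-based argument---which exploits the abundance of small-degree factors and the symmetry $D(g) = \{n - k : k \in D(g)\}$---to the polynomial setting, applying it to the count $N := |D(g_1) \cap D(g_2) \cap D(g_3) \cap \{1,\ldots,n-1\}|$ (whose first moment is $\sum_k \pp(k \in D(g))^3$ by independence of the $g_i$).

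\textbf{Uniformity in $q$ and main obstacle.}  The crux of the difficulty is uniformity in $q$.  For large $q$, the factorization statistics of $\chi_g$ converge to the permutation case and McKemmie's bounds transfer cleanly.  For small $q$ (especially $q=2$), the number of low-degree irreducibles is limited and the Poisson approximation weakens.  Here Theorem~\ref{t:charpoly} is essential: the absolute upper bound on $\pp(\chi_g = \phi)$ prevents pathological concentration of the distribution on polynomials with sparse divisor-degree set (e.g.\ irreducibles, for which $D = \{0,n\}$), and combined with the cycle index this yields moment bounds uniform in $q$.  The $GL$-to-$SL$ passage is handled by standard coset averaging, since $\det g$ is read off from the constant coefficient of $\chi_g$.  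The hardest step is establishing the uniform lower bound on $\mathbb{E}[N]$ together with a compatible variance bound that survives for small $q$; this is where Theorem~\ref{t:charpoly} does the heavy lifting, by converting the raw cycle-index estimates into genuine lower bounds on $\pp(k \in D(g))$ that do not degrade as $q$ decreases.
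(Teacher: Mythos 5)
Your opening reduction is sound: an element of $GL(n,q)$ stabilizes a $k$-dimensional subspace if and only if its characteristic polynomial has a monic degree-$k$ divisor, and since $SL(n,q)$ is transitive on $k$-subspaces the failure of invariable generation of an irreducible subgroup is equivalent to the three divisor-degree sets $D(g_i)$ meeting in $\{1,\dots,n-1\}$. But from that point on what you have written is a research plan, not a proof. Every quantitatively hard step is announced rather than executed: the Poisson approximation for factor counts ``uniformly in $q$,'' the asymptotic independence across degrees, the first- and second-moment bounds on $N$, and the transfer of the Eberhard--Ford--Green/Pemantle--Peres--Rivin argument to the polynomial setting are all asserted with ``the plan is to'' and ``the hardest step is,'' and none is carried out. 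Moreover, the one concrete mechanism you propose for handling small $q$ does not work as described: Theorem \ref{t:charpoly} is an \emph{upper} bound of order $(\log_q n)^6/q^n$ on the probability of a \emph{single} characteristic polynomial, whereas the events $\{k \in D(g)\}$ and their complements are unions of exponentially many polynomials of total probability bounded away from $0$ and $1$; summing the pointwise bound over such a set gives nothing, so it cannot ``convert raw cycle-index estimates into genuine lower bounds on $\pp(k\in D(g))$'' by itself. As it stands the argument has a genuine gap precisely where the difficulty lies.

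The paper takes an entirely different and much shorter route that avoids all of this. By Steinberg's theorem (Theorem \ref{maximal tori}), the number of $F$-stable maximal tori containing a semisimple $s$ equals the number of unipotent elements of $C_G(s)^F$, which yields a bijection between $G^F$ and the disjoint union of the sets $T^F$ over all $F$-stable maximal tori $T$. Combined with the identity $n_w|T_w^F|/|G^F| = |w^W|/|W|$ and the observation that an element lying in a conjugate of $T_w$ with no invariant $e$-dimensional subspace forces $w$ to fix no subset of size $e$, this gives, for $r$-tuples, that the probability that $r$ random elements of $SL(n,q)$ invariably generate an irreducible subgroup is \emph{less than} the probability that $r$ random elements of $S_n$ invariably generate a transitive subgroup --- an exact inequality valid for every $q$, with no Poisson approximation, no uniformity analysis, and no use of Theorem \ref{t:charpoly}. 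The case $r=3$ then follows immediately from \cite{EFG, PPR}. If you want to salvage your approach you would essentially be re-proving the Eberhard--Ford--Green theorem in the $\mathbb{F}_q[x]$ setting uniformly in $q$; the torus comparison makes that unnecessary.
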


\section{Characteristic polynomials} \label{charpoly}

This section studies the proportion of elements of a finite classical group $G$ with a given characteristic polynomial.
While exact formulas were previously known in most cases, our applications require upper bounds, which are
not immediately obvious from exact formulas. The linear, unitary, symplectic, and orthogonal groups are considered
in Subsections \ref{subGL}, \ref{subU}, \ref{subSp}, \ref{subO} respectively.

\subsection{$GL(n,q)$} \label{subGL}

The main point of this subsection is to prove the following theorem. In its proof, and throughout the paper,
we let $N(q;d)$ denote the number of monic degree $d$ irreducible polynomials over $\mathbb{F}_q$ with
non-zero constant term.

\begin{theorem} \label{GLbound} For any polynomial $\phi$, the chance that a random element of $GL(n,q)$ has
characteristic polynomial $\phi$ is at most
\[ \frac{e^6 [1+\log_q(n+1)]^6}{q^n} .\]
\end{theorem}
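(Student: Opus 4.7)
The plan is to express the proportion in closed form by summing over conjugacy classes with the given characteristic polynomial, obtain a clean product formula via a standard generating-function identity, and then bound the resulting product using a crude harmonic-sum estimate after grouping by degree.

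First I would factor $\phi = \prod_i \phi_i^{a_i}$ into its distinct monic irreducible factors $\phi_i$ of degree $d_i$ (all necessarily with nonzero constant term, since invertibility forces $\phi(0) \neq 0$). The conjugacy classes of $GL(n,q)$ with characteristic polynomial $\phi$ are parameterized by tuples of partitions $(\lambda^{(i)})$ with $|\lambda^{(i)}| = a_i$, and the centralizer of an element with this data has order $\prod_i c_{\lambda^{(i)}}(q^{d_i})$, where
\[
c_\lambda(Q) = Q^{|\lambda| + 2n(\lambda)} \prod_{j \geq 1}\prod_{l=1}^{m_j(\lambda)}(1-Q^{-l})
\]
is the standard centralizer polynomial. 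Hence the desired probability equals
\[
\prod_i S_{a_i}(q^{d_i}), \qquad S_a(Q) := \sum_{|\lambda|=a}\frac{1}{c_\lambda(Q)}.
\]

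Next I would invoke the classical identity (essentially Euler/Stong)
\[
\sum_{a \geq 0} S_a(Q)\, x^a \;=\; \prod_{i \geq 1}\frac{1}{1-x/Q^i},
\]
which upon expanding as $\sum_{(n_1,n_2,\ldots)} x^{\sum n_i}/Q^{\sum i n_i}$ and reinterpreting via the bijection between sequences with $\sum n_i = a$ and partitions into exactly $a$ parts yields the closed form
\[
S_a(Q) \;=\; \frac{1}{Q^{a}\prod_{k=1}^{a}(1-Q^{-k})}.
\]
Multiplying over $i$ and using $\sum a_i d_i = n$ then reduces Theorem \ref{GLbound} to proving
\[
B(\phi) \;:=\; \prod_i \prod_{k=1}^{a_i}\frac{1}{1-q^{-kd_i}} \;\leq\; e^{6}[1+\log_q(n+1)]^{6}.
\]

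For the bound on $B(\phi)$, the key move is to group factors by their degree: let $r_d$ denote the number of distinct irreducible factors of $\phi$ of degree $d$, so that $r_d \leq \min(n/d,\,N(q;d)) \leq \min(n/d,\, q^d/d)$. The elementary inequality $-\log(1-x) \leq 2x$ for $x \leq 1/2$ (valid for $x = q^{-kd_i}$ since $q \geq 2$, $kd_i \geq 1$) gives
\[
\log B(\phi) \;\leq\; 2\sum_i \sum_{k \geq 1} q^{-kd_i} \;\leq\; 4\sum_{d \geq 1}\frac{r_d}{q^d}.
\]
Splitting at $d_0 = \lfloor \log_q(n+1)\rfloor$, the low-degree part is controlled by $r_d/q^d \leq 1/d$, contributing a harmonic tail $\sum_{d \leq d_0} 1/d \leq 1 + \log(1+\log_q(n+1))$, while the high-degree part is controlled by $r_d \leq n/d$ and a geometric tail bounded by a universal constant. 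This produces $\log B(\phi) = O\!\bigl(\log(1+\log_q(n+1))\bigr) + O(1)$, which exponentiates to the claimed polylogarithmic bound.

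The substantive obstacle will be tracking the numerical constants tightly enough to recover the precise factors $e^{6}$ and exponent $6$ in the stated inequality. The polylogarithmic shape $[1+\log_q(n+1)]^{O(1)}$ is essentially forced by the argument above; recovering $(6,6)$ will require a careful piecewise treatment of $-\log(1-x)$ near $x = 1/2$ (the $q=2$, $kd_i = 1$ case), a sharper use of $1/(q^{d}-1)$ versus $2/q^d$, and a slightly optimized choice of the splitting threshold together with the elementary bound $\sum_{d=1}^{D} 1/d \leq 1+\log D$.
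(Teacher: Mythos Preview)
Your approach is essentially the paper's: both reach the closed form
\[
q^{-n}\prod_i\prod_{k=1}^{a_i}\bigl(1-q^{-k d_i}\bigr)^{-1}
\]
(the paper by citing Reiner, you by the generating-function identity, which gives the same thing) and then bound the residual product by grouping irreducible factors by degree and using $N(q;d)\le q^d/d$ together with a harmonic estimate.

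The only real difference is in the step you flag as the obstacle, and the paper's treatment there is cleaner. Rather than splitting at a threshold $d_0$ and having a geometric tail to control, the paper (i) bounds each full infinite product $\prod_{k\ge 1}(1-q^{-k d_i})^{-1}\le e^{6/q^{d_i}}$ in one stroke via Euler's pentagonal number theorem (this is where the constant $6$ enters), and (ii) argues that the worst case for $\sum_i q^{-d_i}$ is when $\phi$ uses \emph{every} irreducible of degree at most $r$ once, where $r$ is the least integer with $\sum_{d\le r} dN(q;d)\ge n$; since $\sum_{d\mid r} dN(q;d)=q^r-1$, one may take $r\le 1+\log_q(n+1)$, and then $\sum_i q^{-d_i}\le\sum_{d=1}^r N(q;d)/q^d\le\sum_{d=1}^r 1/d\le 1+\log r$. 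This produces $e^{6(1+\log r)}=e^6 r^6$ directly, with no high-degree tail to estimate. So your worry about recovering the specific pair $(e^6,6)$ disappears once you replace the split-at-$d_0$ argument by this worst-case observation.
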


\begin{proof} Let $\phi$ be a monic degree $n$ polynomial over $\mathbb{F}_q$ with non-zero constant term.
Suppose $\phi$ factors into a product of irreducibles as $\phi = \prod \phi_i^{j_i}$ where
$\phi_i$ is irreducible of degree $m(\phi_i)$. Then from \cite{R}, the
chance that a random element of $GL(n,q)$ has characteristic polynomial $\phi$ is
\[ \prod_{i \geq 1} \frac{q^{m(\phi_i) j_i(j_i-1)}}{|GL(j_i,q^{m(\phi_i)})|} .\]
This is equal to \begin{equation} \label{starred}
 \frac{1}{q^n} \prod_{i \geq 1} \frac{1}{(1-1/q^{m(\phi_i)})(1-1/q^{2m(\phi_i)})
\cdots (1-1/q^{j_im(\phi_i)})} \end{equation}

It is easily proved that for $q \geq 2$ and non-negative integers $a$,
\[ \frac{1}{(1-1/q^a)(1-1/q^{2a}) \cdots} \leq \frac{1}{1-1/q^a-1/q^{2a}} \leq (1+6/q^a) \leq e^{6/q^a} .\]
The first inequality followed from Euler's pentagonal number theorem. We remark that the $6$ can be replaced
by $3$ when $q=3$ and by $2$ when $q \geq 4$, but we do not need this. Thus the proportion of elements of
$GL(n,q)$ with characteristic $\phi$ is at most
\[ \frac{1}{q^n} \prod_i e^{6/q^{m(\phi_i)}} .\] The worst case of this bound is when the characteristic
polynomial uses as many small degree factors as possible, each with multiplicity one. Hence if $r$ is such
that $\sum_{d=1}^r d N(q;d) \geq n$, this bound is at most
\[ \frac{1}{q^n} \prod_{d=1}^r e^{6 N(q;d)/q^d} .\] Since $N(q;d) \leq q^d/d$, this bound is at most
\[ \frac{1}{q^n} e^{\sum_{d=1}^r 6/d} \leq \frac{e^{6(1+\log(r))}}{q^n} = \frac{e^6 r^6}{q^n} .\]

Observe that $r$ can be taken to be the smallest integer such that $q^r-1 \geq n$, since
\[ \sum_{d=1}^r d N(q;d) \geq \sum_{d|r} dN(q;d) = q^r-1 .\] Hence we can take
$r=1 + \log_q(n+1)$, which completes the proof.
\end{proof}

{\it Remark:}  As noted in the proof, the $e^6$ can be replaced by $e^3$ for $q = 3$ and 
$e^2$ for $q \ge 4$.  \\

{\it Remark:} It is obvious from \eqref{starred} that a lower bound on the proportion
of elements of $GL(n,q)$ with a given characteristic polynomial is $\frac{1}{q^n-1}$.  \\

{\it Remark:} The bound from Theorem \ref{GLbound} also holds for random $n \times n$ matrices (not necessarily invertible) over
the field of $q$ elements. This follows without difficulty from the fact that the proportion of such matrices which have characteristic polynomial $\phi$
is obtained by multiplying \eqref{starred} by $(1-1/q)(1-1/q^2) \cdots (1-1/q^n)$. We will consider this problem for the other Lie algebras in a sequel.  

\subsection{$GU(n,q)$} \label{subU}

In this section we focus on $GU(n,q)$. Given a polynomial $\phi \in \mathbb{F}_{q^2}[t]$ with non-zero constant term,
\[ \phi(t)=t^n+a_{n-1}t^{n-1} + \cdots + a_1t + a_0 ,\] we define the conjugate $\tilde{\phi}$ by
\[ \tilde{\phi}(t) = t^n + (a_1a_0^{-1})^q t^{n-1} + \cdots + (a_{n-1}a_0^{-1})^q t + (a_0^{-1})^q .\]
We let $\tilde{N}(q;d)$ denote the number of monic irreducible self-conjugate polynomials of degree $d$
over $\mathbb{F}_{q^2}$. We also let $\tilde{M}(q;d)$ denote the number of (unordered) conjugate pairs $\{ \phi,\tilde{\phi} \}$
of monic irreducible polynomials of degree $d$ over $\mathbb{F}_{q^2}$ that are not self-conjugate. As in the
previous section, we let $m(\phi)$ denote the degree of a polynomial $\phi$.

The following lemma is crucial.

\begin{lemma} \label{uform} (\cite{F}) Let $\phi$ be a monic, self-conjugate polynomial of degree $n$ which factors
into irreducibles as \[ \phi = \prod_i \phi_i^{j_i} \prod_{i'} \left[ \phi_{i'} \tilde{\phi_{i'}} \right]^{j_{i'}}, \]
where $\phi_i=\tilde{\phi_i}$ and $\phi_{i'} \neq \tilde{\phi_{i'}}$. Then the proportion of elements of
$GU(n,q)$ with characteristic polynomial $\phi$ is:

\[ \prod_i \frac{q^{m(\phi_i)j_i(j_i-1)}}{|GU(j_i,q^{m(\phi_i)})|} \prod_{i'} \frac{q^{2m(\phi_{i'})j_{i'}(j_{i'}-1)}}
{|GL(j_{i'},q^{2m(\phi_i')})|} .\]
\end{lemma}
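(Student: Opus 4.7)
The plan is to deduce the formula from the conjugacy class theory of $GU(n,q)$ (as developed by Wall) combined with a partition identity already implicit in the linear case. Conjugacy classes in $GU(n,q)$ are parameterized by assignments of a partition to each monic $\mathbb{F}_{q^2}$-irreducible polynomial with nonzero constant term, constant along orbits of the involution $\phi \mapsto \tilde\phi$, with total weighted size $n$. The characteristic polynomial of a class with this data is $\prod_\phi \phi^{|\lambda(\phi)|}$. Hence, fixing the self-conjugate polynomial $\phi = \prod_i \phi_i^{j_i} \prod_{i'}[\phi_{i'}\tilde\phi_{i'}]^{j_{i'}}$, the classes with this characteristic polynomial correspond to independent choices of partitions $\lambda^{(i)} \vdash j_i$ (one per self-conjugate $\phi_i$) and $\lambda^{(i')} \vdash j_{i'}$ (one per conjugate pair $\{\phi_{i'},\tilde\phi_{i'}\}$).

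Next one needs the centralizer orders. The centralizer of a given element factors as a product of local contributions indexed by the orbits of the involution. For a self-conjugate $\phi_i$ of degree $d_i$, the local factor has the form of the centralizer in $GU(j_i, q^{d_i})$ of a unipotent element of Jordan type $\lambda^{(i)}$; for a non-self-conjugate pair of degree $d_{i'}$ it has the form of the centralizer in $GL(j_{i'}, q^{2d_{i'}})$ of a unipotent element of type $\lambda^{(i')}$. The appearance of $GL$ rather than $GU$ in this second case is the crucial structural input: the generalized eigenspaces for $\phi_{i'}$ and $\tilde\phi_{i'}$ form a hyperbolic pair under the Hermitian form, so an $\mathbb{F}_{q^{2d_{i'}}}$-linear automorphism of one is matched by a unique compatible automorphism of the other, with no residual form-preservation constraint.

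Summing reciprocals of centralizer orders over all classes with characteristic polynomial $\phi$, the sum factors over orbits, and it remains to evaluate, for each $H \in \{GL, GU\}$,
\[ \sum_{\lambda \vdash j} \frac{1}{c^{H}_\lambda(Q)} = \frac{Q^{j(j-1)}}{|H(j,Q)|}, \]
where $c^{H}_\lambda(Q)$ is the centralizer order of a unipotent of type $\lambda$ in $H(|\lambda|,Q)$. In the $GL$ case this is precisely the identity underlying the product form \eqref{starred} of Subsection \ref{subGL}; the $GU$ analog is obtained in the same spirit, with Ennola's $q \mapsto -q$ substitution in the generating function $\sum_\lambda Q^{-\sum (\lambda_i')^2}/\prod (1/Q)_{m_i(\lambda)}$ converting the linear identity to the unitary one. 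Applying this identity orbit-by-orbit with $Q = q^{d_i}$ (unitary case) or $Q = q^{2d_{i'}}$ (linear case) yields exactly the product in the lemma. The main obstacle is the structural centralizer computation for $GU$, especially the verification that non-self-conjugate pairs contribute $GL$-type rather than $GU$-type local factors; once that is in hand the remaining partition algebra is routine generating-function manipulation.
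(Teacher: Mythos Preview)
The paper does not prove this lemma; it simply quotes the result from \cite{F} (Fulman's cycle-index paper). So there is no ``paper's own proof'' to compare against here.

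Your outline is correct and is essentially the argument behind the cited reference: Wall's parameterization of $GU(n,q)$-classes by partition-valued functions constant on $\{\phi,\tilde\phi\}$-orbits, the factorization of centralizer orders into local pieces (unitary-type for self-conjugate factors, linear-type for non-self-conjugate pairs), and then summing $1/|C|$ over all partitions of fixed size. One small remark: the identity $\sum_{\lambda\vdash j} 1/c^{H}_\lambda(Q) = Q^{j(j-1)}/|H(j,Q)|$ does not really require Ennola duality or generating-function manipulation. It is just the statement that the number of unipotent elements in $H(j,Q)$ equals $Q^{j(j-1)}$, which is Steinberg's theorem applied to the connected reductive groups $GL_j$ and $GU_j$; the left side is exactly (number of unipotents)$/|H(j,Q)|$ once you write the unipotent variety as a union of classes. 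Invoking Steinberg directly is cleaner than the $q\mapsto -q$ route you sketch, though both are valid.
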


Theorem \ref{ubound} is the main result of this subsection.

\begin{theorem} \label{ubound} For any polynomial $\phi$, the chance that a random element of $GU(n,q)$ has
characteristic polynomial $\phi$ is at most
\[ \frac{e [2+\log_q(n+1)]}{q^n} .\]
\end{theorem}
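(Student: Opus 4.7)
The plan is to mimic Theorem \ref{GLbound} but exploit two simplifications specific to the unitary setting that improve the bound from polynomial in $\log_q n$ to linear. Starting from Lemma \ref{uform} and using $|GU(j,Q)|=Q^{j(j-1)/2}\prod_{k=1}^{j}(Q^k-(-1)^k)$ with $Q=q^{m(\phi_i)}$, and $|GL(j,Q)|=Q^{j^2}\prod_{k=1}^{j}(1-Q^{-k})$ with $Q=q^{2m(\phi_{i'})}$, the proportion factors as
\[
\frac{1}{q^n}\prod_{i}\frac{1}{\prod_{k=1}^{j_i}\!\bigl(1-(-1)^k/q^{m(\phi_i)k}\bigr)}\prod_{i'}\frac{1}{\prod_{k=1}^{j_{i'}}\!\bigl(1-1/q^{2m(\phi_{i'})k}\bigr)},
\]
where $n=\sum_i m(\phi_i)j_i+\sum_{i'}2m(\phi_{i'})j_{i'}$.

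The first simplification is that the self-conjugate product is always $\ge 1$, so its reciprocal drops out. Pairing consecutive factors gives $(1+1/q^{(2\ell-1)m})(1-1/q^{2\ell m})=1+1/q^{(2\ell-1)m}-1/q^{2\ell m}-1/q^{(4\ell-1)m}\ge 1$ whenever $q^m\ge 2$, which always holds, and any unpaired final factor $1+1/q^{j_i m}$ is also $\ge 1$. The second simplification is that conjugate-pair factors involve $q^{2m}$ rather than $q^m$: from
\[
\prod_{k\ge 1}(1-1/q^{2mk})\ge 1-\sum_{k\ge 1}1/q^{2mk}=1-1/(q^{2m}-1)
\]
and $q^{2m}\ge 4$, the reciprocal is at most $1+2/q^{2m}\le e^{2/q^{2m}}$. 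Thus the proportion is bounded above by $e^{\sum_{i'}2/q^{2m(\phi_{i'})}}/q^n$.

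To maximize the exponent subject to the degree constraint, note that per unit of degree consumed, a pair of degree $m$ contributes $1/(mq^{2m})$ to the exponent, which is maximized at $m=1$. The worst case therefore uses as many distinct conjugate pairs of smallest degree as possible. With the standard bound $\tilde M(q;m)\le N(q^2;m)/2\le q^{2m}/(2m)$, the exponent is at most $\sum_{m=1}^r 2\tilde M(q;m)/q^{2m}\le \sum_{m=1}^r 1/m\le 1+\ln r$, where $r$ is chosen so that $\sum_{m=1}^r 2m\tilde M(q;m)\ge n$.

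It remains to check that $r\le 2+\log_q(n+1)$ suffices. From $\sum_{m\mid r}mN(q^2;m)=q^{2r}-1$ and the identity $\sum_{d\mid m,\,d\text{ odd}}d\tilde N(q;d)=q^m+1$ for odd $m$ (obtained by counting the $(q^m+1)$-th roots of unity as roots of self-conjugate irreducibles of odd degree dividing $m$), one estimates $\sum_{m\le r}m\tilde N(q;m)\le 2q^r+r$, so $\sum_{m\le r}2m\tilde M(q;m)\ge q^{2r}-2q^r-r-1$. Choosing $r=\lceil 1+\log_q(n+1)\rceil$ makes this at least $n$, and then $r\le 2+\log_q(n+1)$. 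Assembling everything yields the bound $e\cdot r/q^n\le e(2+\log_q(n+1))/q^n$. The main subtlety is the number-theoretic estimate on $\sum m\tilde N(q;m)$; everything else closely parallels Theorem \ref{GLbound}.
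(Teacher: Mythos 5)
Your proof is correct and follows essentially the same route as the paper's: reduce via Lemma \ref{uform}, discard the self-conjugate factors, bound each conjugate-pair factor by $e^{2/q^{2m}}$, and then optimize over the worst-case factorization to get $er/q^n$. The only divergences are cosmetic --- you justify the two key inequalities by elementary pairing and geometric-series arguments where the paper invokes Euler's pentagonal number theorem, and you make the final estimate that $\sum_{d\le r}2d\tilde{M}(q;d)\ge n$ holds for some $r\le 2+\log_q(n+1)$ self-contained where the paper cites the proof of Theorem 6.7 of \cite{FG1}.
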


\begin{proof} The theorem is immediate for $n=1$, so suppose that $n>1$.

 From Lemma \ref{uform}, the proportion of elements of $GU(n,q)$ with characteristic polynomial $\phi$
is equal to
\begin{eqnarray*}
& & \frac{1}{q^n} \prod_i \frac{1}{(1+1/q^{m(\phi_i)}) (1-1/q^{2m(\phi_i)}) \cdots (1- (-1)^{j_i}/q^{j_i m(\phi_i)})} \\
& & \cdot \prod_{i'} \frac{1}{(1-1/q^{2m(\phi_{i'})}) (1-1/q^{4m(\phi_{i'})}) \cdots (1-1/q^{2 j_{i'} m(\phi_{i'})})}.
\end{eqnarray*}
The $i$ terms in the above product are all less than 1, so the proportion of elements of $GU(n,q)$ with characteristic
polynomial $\phi$ is at most
\[ \frac{1}{q^n} \prod_{i'} \frac{1}{(1-1/q^{2m(\phi_{i'})}) (1-1/q^{4m(\phi_{i'})}) \cdots (1-1/q^{2 j_{i'} m(\phi_{i'})})} .\]

Now if $q \geq 2$ and $a \geq 2$, one has that
\[ \frac{1}{(1-1/q^a)(1-1/q^{2a}) \cdots} \leq \frac{1}{1-1/q^a-1/q^{2a}} \leq 1 + \frac{2}{q^a} \leq e^{2/q^a} .\]
(The first inequality is from Euler's pentagonal number theorem). Thus the proportion of elements of $GU(n,q)$ with
characteristic polynomial $\phi$ is at most
\[ \frac{1}{q^n} \prod_{i'} e^{2/q^{2m(\phi_{i'})}}.\] The worst case of this bound is when the characteristic polynomial
uses as many small degree conjugate pairs of non-self-conjugate polynomials as possible, each with multiplicity one. Hence if $r$
is such that $\sum_{d=1}^r 2d \tilde{M}(q;d) \geq n$, this bound is at most
\[ \frac{1}{q^n} \prod_{d=1}^r e^{2 \tilde{M}(q;d)/q^{2d}} .\]
Since $\tilde{M}(q;d) \leq q^{2d}/(2d)$, this bound is at most
\[ \frac{1}{q^n} e^{\sum_{d=1}^r 1/d} \leq \frac{e^{(1+\log(r))}}{q^n} = \frac{er}{q^n} .\]

From the proof of Theorem 6.7 of \cite{FG1}, if $r$ is odd and $q^r \geq n+1$, one has that $\sum_{d=1}^r 2d \tilde{M}(q;d) \geq n$
for $n>1$. Hence one can take $r \leq 2+ \log_q(n+1)$, and the final bound of the theorem becomes
\[ \frac{e [2+\log_q(n+1)]}{q^n} .\]
\end{proof}

\subsection{$Sp(2n,q)$} \label{subSp}

In this subsection we focus on $Sp(2n,q)$. Given a polynomial $\phi \in \mathbb{F}_{q}[t]$ with non-zero constant term
\[ \phi(t) = t^n+a_{n-1}t^{n-1} + \cdots + a_1t + a_0,\]
we define the conjugate polynomial \[ \phi^*(t)=t^n+a_1a_0^{-1}t^{n-1} + \cdots + a_{n-1}a_0^{-1}t + a_0^{-1} .\]
We also let $M^*(q;d)$ denote the number of (unordered) conjugate pairs $\{ \phi,\phi^* \}$ of monic,
irreducible non-self-conjugate polynomials of degree $d$ over $\mathbb{F}_q$. As in the
previous sections, we let $m(\phi)$ denote the degree of a polynomial $\phi$.

The following lemma from \cite{F} is crucial.

\begin{lemma} \label{countSp}
\begin{enumerate}

\item Suppose that the characteristic is odd. Let $\phi$ be a monic, self-conjugate polynomial of degree $2n$
which factors into irreducibles as
\[ \phi = (z-1)^{2a} (z+1)^{2b} \prod_i \phi_i^{j_i} \prod_{i'} \left[ \phi_{i'} \phi_{i'}^* \right]^{j_{i'}}, \]
where $\phi_i=\phi_i^*$ and $\phi_{i'} \neq \phi_{i'}^*$. Then the proportion of elements of $Sp(2n,q)$ with
characteristic polynomial $\phi$ is:

\[ \frac{q^{2a^2}}{|Sp(2a,q)|} \frac{q^{2b^2}}{|Sp(2b,q)|} \prod_i \frac{q^{m(\phi_i)j_i(j_i-1)/2}}{|GU(j_i,q^{m(\phi_i)/2})|}
\prod_{i'} \frac{q^{m(\phi_{i'})j_{i'}(j_{i'}-1)}}{|GL(j_{i'},q^{m(\phi_{i'})})|}. \]

\item Suppose that the characteristic is even. Let $\phi$ be a monic, self-conjugate polynomial of degree $2n$
which factors into irreducibles as
\[ \phi = (z-1)^{2a} \prod_i \phi_i^{j_i} \prod_{i'} \left[ \phi_{i'} \phi_{i'}^* \right]^{j_{i'}}, \]
where $\phi_i=\phi_i^*$ and $\phi_{i'} \neq \phi_{i'}^*$. Then the proportion of elements of $Sp(2n,q)$ with
characteristic polynomial $\phi$ is:

\[ \frac{q^{2a^2}}{|Sp(2a,q)|} \prod_i \frac{q^{m(\phi_i)j_i(j_i-1)/2}}{|GU(j_i,q^{m(\phi_i)/2})|}
\prod_{i'} \frac{q^{m(\phi_{i'})j_{i'}(j_{i'}-1)}}{|GL(j_{i'},q^{m(\phi_{i'})})|}. \]

\end{enumerate}
\end{lemma}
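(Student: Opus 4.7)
The plan is to decompose the natural module $V = \mathbb{F}_q^{2n}$ under the action of $g$ into its primary components and establish the formula factor by factor. Because the centralizer of $g$ in $Sp(2n,q)$ splits as a direct product indexed by the distinct primary divisors of $\phi$, it suffices to prove a local version of the statement at each primary factor and then multiply the contributions.

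First I would verify that the primary components of $V$ are arranged orthogonally under the symplectic form $B$. From $B(gu,gv)=B(u,v)$ one deduces $B(p(g)u,v) = B(u, p^*(g)v)$ for any polynomial $p$, where $p^*$ is the involution defined at the start of this subsection; consequently, if $\phi$ is coprime to $\psi^*$, then $V_\phi$ and $V_\psi$ are $B$-orthogonal. This shows that $V_{\phi_{i'}}$ is paired hyperbolically with $V_{\phi_{i'}^*}$ while each of $V_{\phi_i^{j_i}}$ (for self-conjugate $\phi_i$), $V_{(z-1)^{2a}}$, and $V_{(z+1)^{2b}}$ carries its own non-degenerate symplectic form. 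For $(z-1)^{2a}$ the restriction is unipotent, and by Steinberg's formula $Sp(2a,q)$ contains exactly $q^{2a^2}$ unipotent elements, giving proportion $q^{2a^2}/|Sp(2a,q)|$; the $(z+1)^{2b}$ factor is handled by $g\mapsto -g$ in odd characteristic and coincides with the unipotent count in characteristic two. For a non-self-conjugate pair $\{\phi_{i'}, \phi_{i'}^*\}$, the totally isotropic $V_{\phi_{i'}}$ is an $\mathbb{F}_{q^{m(\phi_{i'})}}$-module on which $g$ acts as an arbitrary element of $GL(j_{i'}, q^{m(\phi_{i'})})$ with characteristic polynomial $\phi_{i'}^{j_{i'}}$, and the action on $V_{\phi_{i'}^*}$ is then forced by the isometry condition; this gives proportion $q^{m(\phi_{i'}) j_{i'}(j_{i'}-1)}/|GL(j_{i'}, q^{m(\phi_{i'})})|$, matching \eqref{starred} specialized to a single primary factor.

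The main obstacle is the self-conjugate irreducible factor $\phi_i \neq z\pm 1$. Here, following Wall, such a $\phi_i$ necessarily has even degree (its roots pair up under $\alpha \mapsto \alpha^{-1}$), and the symplectic form on $V_{\phi_i^{j_i}}$ descends, via an appropriate twist by a power of $\phi_i'(g)$, to a non-degenerate Hermitian form over the fixed field $\mathbb{F}_{q^{m(\phi_i)/2}}$. Moreover, the centralizer of $g|_{V_{\phi_i^{j_i}}}$ inside $Sp(V_{\phi_i^{j_i}})$ is naturally identified with the centralizer, in $GU(j_i, q^{m(\phi_i)/2})$, of an element whose characteristic polynomial is a single primary factor. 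Summing $1/|\text{centralizer}|$ over all $Sp$-conjugacy classes with that local characteristic polynomial (parametrized by partitions of $j_i$, one partition per isometry type in the Wall classification) telescopes via the standard $q$-series cancellations to the quoted proportion $q^{m(\phi_i) j_i(j_i-1)/2}/|GU(j_i, q^{m(\phi_i)/2})|$. Multiplying these local contributions across all primary factors yields both cases of the lemma. The delicate part is carefully establishing the Wall reduction in the self-conjugate case, in particular verifying the non-degeneracy of the induced Hermitian form and the compatibility of the $Sp$-centralizer with the corresponding $GU$-centralizer; the subsequent counting is a routine $q$-series manipulation.
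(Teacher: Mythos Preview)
The paper does not supply a proof of this lemma at all; it is simply quoted from \cite{F} as a known result (see the line ``The following lemma from \cite{F} is crucial'' immediately preceding the statement).  So there is no argument in the paper to compare against.

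Your sketch is, in outline, the standard route to this formula and is essentially what underlies the cited reference: one uses the orthogonal decomposition of $V$ into primary components with respect to the $*$-involution, invokes Steinberg's unipotent count for the $(z\mp 1)$ blocks, reduces the non-self-conjugate pairs to a $GL$ count on a totally isotropic summand, and---following Wall's classification---identifies the self-conjugate non-linear block with a unitary group situation over $\mathbb{F}_{q^{m(\phi_i)/2}}$.  The one place where your sketch is a bit hand-wavy is the last step: you say that summing $1/|\text{centralizer}|$ over the relevant $Sp$-classes ``telescopes via the standard $q$-series cancellations'' to the $GU$ proportion.  What actually makes this work cleanly is that the Wall parametrization gives a bijection between the $Sp$-classes on that block and the $GU(j_i,q^{m(\phi_i)/2})$-classes with a fixed primary characteristic polynomial, under which the centralizer orders match exactly; the identity then follows from Reiner's formula applied inside $GU$.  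If you want to turn this into an honest proof you should state that bijection and centralizer comparison explicitly rather than appealing to unspecified cancellations, but the overall architecture is correct.
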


Theorem \ref{sym} is the main result of this subsection.

\begin{theorem} \label{sym}

\begin{enumerate}
\item Suppose that the characteristic is odd. Then for any polynomial $\phi$, the chance that a random element
of $Sp(2n,q)$ has characteristic polynomial $\phi$ is at most
\[ \frac{20[\log_q(4n)+4]^{3/2}}{q^n}. \]

\item Suppose that the characteristic is even. Then for any polynomial $\phi$, the chance that a random element
of $Sp(2n,q)$ has characteristic polynomial $\phi$ is at most
\[ \frac{242 [\log_q(4n)+4]^3}{q^n}. \]
\end{enumerate}
\end{theorem}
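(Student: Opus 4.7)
The plan is to apply Lemma~\ref{countSp} directly and follow the template of the proofs of Theorems~\ref{GLbound} and~\ref{ubound}. I first rewrite the formula in Lemma~\ref{countSp} using the standard orders $|Sp(2a,q)|=q^{a^2}\prod_{i=1}^a(q^{2i}-1)$, $|GU(j,Q)|=Q^{j(j-1)/2}\prod_{k=1}^j(Q^k-(-1)^k)$, and $|GL(j,Q)|=Q^{j(j-1)/2}\prod_{k=1}^j(Q^k-1)$. The exponent of $q$ in the numerator equals half the total degree, so the prefactor $1/q^n$ comes out of the expression via $2a+2b+\sum_i m(\phi_i)j_i+2\sum_{i'}m(\phi_{i'})j_{i'}=2n$ (with $b=0$ in even characteristic), leaving a product of Pochhammer-type factors, one for each irreducible component: the $(z-1)^{2a}$ and $(z+1)^{2b}$ pieces produce $\prod_{i=1}^a(1-q^{-2i})^{-1}$ and similarly for $b$; each self-conjugate $\phi_i\neq z\pm 1$ contributes the alternating product $\prod_{k=1}^{j_i}(1-(-1)^k q^{-m(\phi_i)k/2})^{-1}$; and each non-self-conjugate pair yields $\prod_{k=1}^{j_{i'}}(1-q^{-m(\phi_{i'})k})^{-1}$.

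Next I bound each factor. For a non-self-conjugate pair of degree $m$, Euler's pentagonal inequality yields $\prod_{k\geq 1}(1-q^{-mk})^{-1}\leq (1-q^{-m}-q^{-2m})^{-1}\leq e^{C/q^m}$ for a suitable constant $C$. The excluded case $(q,m)=(2,1)$ never arises in the symplectic setting because $\mathbb{F}_2^{*}$ has only one element, so in odd characteristic one can take $C=3$, while in even characteristic a slightly larger $C=6$ is used to absorb remaining edge cases, which is exactly what inflates the exponent in the final bound from $3/2$ to $3$. For a self-conjugate $\phi_i\neq z\pm 1$ the degree is even and at least $2$, so with $Q=q^{m(\phi_i)/2}\geq 2$ the odd-indexed factors $(1+Q^{-k})^{-1}$ are all at most $1$, while the even-indexed subproduct $\prod_\ell(1-Q^{-2\ell})^{-1}$ is again controlled by pentagonal. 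The $(z\pm 1)^{2a}$ contributions are bounded by the absolute constant $\prod_{i\geq 1}(1-q^{-2i})^{-1}\leq (1-q^{-2}-q^{-4})^{-1}$, uniformly in $a$.

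Third, I determine the worst case. The inequality $(1-q^{-m})^{-j}\geq\prod_{k=1}^j(1-q^{-mk})^{-1}$ shows that splitting a multiplicity-$j$ factor into $j$ distinct multiplicity-one factors of the same degree can only enlarge the bound, while a multiplicity-one self-conjugate $\phi_i\neq z\pm 1$ contributes a factor at most $1$ and can only be improved by substituting non-self-conjugate pairs of smaller degree. Hence the worst case uses only multiplicity-one non-self-conjugate pairs of the smallest available degrees. Using $M^*(q;d)\leq q^d/(2d)$, the exponent sum telescopes as $\sum_{d=1}^r M^*(q;d)\cdot C/q^d\leq (C/2)(1+\log r)$, producing a multiplicative factor of the form $e^{C/2}\,r^{C/2}$. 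Finally, choosing $r$ minimal with $\sum_{d=1}^r 2dM^*(q;d)\geq 2n$, by an analog of the estimate invoked in the proof of Theorem~\ref{ubound}, gives $r\leq \log_q(4n)+4$, and assembling the pieces yields the two stated bounds.

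The main obstacle is the worst-case comparison, especially the handling of the self-conjugate factors: the alternating product $\prod_k(1-(-1)^k Q^{-k})^{-1}$ does not admit a direct pentagonal estimate, and one must argue that the odd-indexed factors are individually harmless so that only the even-indexed subproduct needs real control. A secondary nuisance is checking that arbitrarily large $a$ and $b$ in the $(z\pm 1)^{2a}(z+1)^{2b}$ part cannot break the bound, which follows from the uniform convergence of $\prod_i(1-q^{-2i})^{-1}$. The characteristic-two case is structurally parallel but forces use of the weaker pentagonal constant $C=6$ to accommodate small $q$ uniformly, which is the origin of the larger constants and the increase of the log-exponent from $3/2$ to $3$.
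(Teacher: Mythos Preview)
Your proposal is correct and follows essentially the same route as the paper: rewrite Lemma~\ref{countSp} to extract $1/q^n$, bound the $(z\pm 1)$ blocks by an absolute constant, observe the self-conjugate $\phi_i$ contributions are at most $1$, control the non-self-conjugate pairs via Euler's pentagonal inequality with constant $C=3$ (odd characteristic, $q\ge 3$) or $C=6$ (even characteristic, $q\ge 2$), pass to the worst case of distinct small-degree pairs, and finish with $M^*(q;d)\le q^d/(2d)$ and a logarithmic bound on $r$.

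Two small points where your write-up diverges from the paper. First, the paper disposes of the self-conjugate $\phi_i\ne z\pm 1$ in one line by noting that the entire alternating product $\prod_k(1-(-1)^kQ^{-k})$ exceeds $1$, so its reciprocal is $<1$; your odd/even splitting is correct but unnecessary. Second, for the cutoff $r$ the paper quotes the proof of Theorem~6.13 in \cite{FG1} to obtain $r\le 2[\log_q(4n)+4]$, not $r\le \log_q(4n)+4$ as you write; it is precisely this factor of $2$ that, combined with $1.5\,(er)^{3/2}$ and $1.5\,(er)^3$, produces the constants $20$ and $242$ in the statement. Your smaller $r$ would give better constants if it were justified, but you only gesture at ``an analog of the estimate'' from Theorem~\ref{ubound}, which concerns a different polynomial count $\tilde M(q;d)$; you should either cite the appropriate external estimate or carry the factor of $2$.
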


\begin{proof} First suppose that the characteristic is odd. From Lemma \ref{countSp}, the proportion of
elements of $Sp(2n,q)$ with characteristic polynomial $\phi$ is equal to
\begin{eqnarray*}
& & \frac{1}{q^n} \frac{1}{(1-1/q^2) \cdots (1-1/q^{2a})} \frac{1}{(1-1/q^2)\cdots (1-1/q^{2b})} \\
& & \cdot \prod_i \frac{1}{(1+1/q^{m(\phi_i)/2})(1-1/q^{2m(\phi_i)/2}) \cdots (1-(-1)^{j_i}/q^{j_im(\phi_i)/2})} \\
& & \cdot \prod_{i'} \frac{1}{(1-1/q^{m(\phi_{i'})}) (1-1/q^{2m(\phi_{i'})}) \cdots (1-1/q^{j_{i'}m(\phi_{i'})})}.
\end{eqnarray*} Since $q \geq 3$,
\[ \frac{1}{(1-1/q^2)(1-1/q^4) \cdots (1-1/q^{2a})} \leq 1.2 \] and
\[ \frac{1}{(1-1/q^2)(1-1/q^4) \cdots (1-1/q^{2b})} \leq 1.2. \] Combining this with the observation that the $i$
terms in the above product are all less than $1$ shows that the proportion of elements of $Sp(2n,q)$ with
characteristic polynomial $\phi$ is at most
\[ \frac{1.5}{q^n} \prod_{i'} \frac{1}{(1-1/q^{m(\phi_{i'})}) (1-1/q^{2m(\phi_{i'})}) \cdots (1-1/q^{j_{i'}m(\phi_{i'})})}.\]

Now if $q \geq 3$ and $a \geq 1$, one has that
\[ \frac{1}{(1-1/q^a)(1-1/q^{2a}) \cdots} \leq \frac{1}{1-1/q^a-1/q^{2a}} \leq 1 + \frac{3}{q^a} \leq e^{3/q^a}. \]
(The first inequality is from Euler's pentagonal number theorem). Thus the proportion of elements of $Sp(2n,q)$ with
characteristic polynomial $\phi$ is at most \[ \frac{1.5}{q^n} \prod_{i'}e^{3/q^{m(\phi_{i'})}}.\] The worst
case of this bound is when the characteristic polynomial uses as many small degree conjugate pairs of non-self-conjugate
polynomials as possible, each with multiplicity one. Hence if $r$ is such that $\sum_{d=1}^r 2d M^*(q;d) \geq 2n$,
this bound is at most \[ \frac{1.5}{q^n} \prod_{d=1}^r e^{3M^*(q;d)/q^d} .\] Since $M^*(q;d) \leq q^d/(2d)$, this
bound is at most \[ \frac{1.5}{q^n} e^{\frac{3}{2} \sum_{d=1}^r 1/d} \leq \frac{1.5}{q^n} e^{\frac{3}{2}(1+\log(r))} =
\frac{1.5 (er)^{3/2}}{q^n}. \] From the proof of Theorem 6.13 in \cite{FG1}, one can take
\[ r \leq 2[\log_q(4n)+4], \] yielding the bound of part one of the theorem.

Next suppose that the characteristic is even. Then arguing as in part one shows that the proportion of elements of $Sp(2n,q)$ with
characteristic polynomial $\phi$ is at most
\[ \frac{1.5}{q^n} \prod_{i'} \frac{1}{(1-1/q^{m(\phi_{i'})}) (1-1/q^{2m(\phi_{i'})}) \cdots (1-1/q^{j_{i'}m(\phi_{i'})})}.\]

Now if $q \geq 2$ and $a \geq 1$, one has that \[ \frac{1}{(1-1/q^a)(1-1/q^{2a}) \cdots} \leq \frac{1}{1-1/q^a-1/q^{2a}}
 \leq 1 + \frac{6}{q^a} \leq e^{6/q^a}. \] Thus the proportion of elements of $Sp(2n,q)$ with
characteristic polynomial $\phi$ is at most \[ \frac{1.5}{q^n} \prod_{i'}e^{6/q^{m(\phi_{i'})}}.\] Again the worst
case of this bound is when the characteristic polynomial uses as many small degree conjugate pairs of non-self-conjugate
polynomials as possible, each with multiplicity one. If $r$ is such that $\sum_{d=1}^r 2d M^*(q;d) \geq 2n$,
this bound is at most \[ \frac{1.5}{q^n} \prod_{d=1}^r e^{6M^*(q;d)/q^d} .\] Since $M^*(q;d) \leq q^d/(2d)$, this
bound is at most \[ \frac{1.5}{q^n} e^{3\sum_{d=1}^r 1/d} \leq \frac{1.5}{q^n} e^{3(1+\log(r))} =
\frac{1.5 (er)^3}{q^n}. \] As in the first part, one can take \[ r \leq 2[\log_q(4n)+4], \] yielding the bound of part two of the theorem.
\end{proof}

\subsection{$O^{\pm}(n,q)$} \label{subO}

Finally, we consider the orthogonal groups, beginning with odd characteristic. Lemma \ref{countO} is from \cite{F} and
uses the same notation as we used for the symplectic groups.

\begin{lemma} \label{countO} Suppose that the characteristic is odd. Suppose that $\phi$ is a monic, self-conjugate polynomial of
degree $n$ which factors into irreducibles as
\[ \phi = (z-1)^a (z+1)^b \prod_i \phi_i^{j_i} \prod_{i'} [\phi_{i'} \phi_{i'}^*]^{j_{i'}} .\] Then the sum of the
proportions of elements in $O^+(n,q)$ and $O^-(n,q)$ with characteristic polynomial $\phi$ is
\[ F(a) F(b) \prod_i \frac{q^{m(\phi_i)j_i(j_i-1)/2}}{|GU(j_i,q^{m(\phi_i)/2})|}
\prod_{i'} \frac{q^{m(\phi_{i'})j_{i'}(j_{i'}-1)}}{|GL(j_{i'},q^{m(\phi_{i'})})|}, \] where

\[ F(m) =             \left\{ \begin{array}{ll}
\frac{q^{m^2/2}}{|Sp(m,q)|} & \mbox{if $m$ is even}\\
\frac{q^{(m-1)^2/2}}{|Sp(m-1,q)|} & \mbox{if $m$ is odd}\end{array}
                        \right.  \]
\end{lemma}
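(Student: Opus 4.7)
The plan is to combine Wall's parametrization of conjugacy classes in the odd-characteristic orthogonal groups with Steinberg's count of unipotent elements in a connected reductive group. Wall's theorem encodes a class in $O^\epsilon(n,q)$ by attaching to each irreducible factor of the characteristic polynomial a combinatorial datum: for each conjugate pair $\{\phi_{i'},\phi_{i'}^*\}$ of non-self-conjugate factors, a partition; for each higher-degree self-conjugate $\phi_i$, a partition with a Hermitian interpretation over $\mathbb{F}_{q^{m(\phi_i)/2}}$; and for $\phi = z \mp 1$, a partition together with a form sign on the corresponding primary component. The primary decomposition of $V$ under $g$ is then an orthogonal direct sum, each summand nondegenerate of a type determined by these data, and the centralizer of $g$ in $O^\epsilon$ factors as a product over these components.

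Summing $1/|C_{O^\epsilon}(g)|$ over all classes with a fixed characteristic polynomial $\phi$ therefore factors along primary components. For a non-self-conjugate pair the factor reduces to $\sum_{\lambda \vdash j_{i'}} 1/|C_{GL(j_{i'},q^{m(\phi_{i'})})}(u_\lambda)|$, which by Steinberg's count of $q^{n(n-1)}$ unipotents in $GL(n,Q)$ equals $q^{m(\phi_{i'})j_{i'}(j_{i'}-1)}/|GL(j_{i'},q^{m(\phi_{i'})})|$. The analogous Steinberg count for $GU$ (again $q^{n(n-1)}$ unipotents, in base field $q^{m(\phi_i)/2}$) produces the $GU(j_i,q^{m(\phi_i)/2})$ factor at each higher-degree self-conjugate $\phi_i$. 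These steps are largely mechanical once Wall's parametrization is in hand.

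The main content is the identification of the $(z \mp 1)$ contributions with $F(a)F(b)$, and this is where summing over $\epsilon \in \{+,-\}$ becomes essential. The global form sign $\epsilon$ of $V$ is the product of the form signs of the primary components, of which only the $(z \pm 1)$-components contribute a free sign: the pairs $\{\phi_{i'},\phi_{i'}^*\}$ give hyperbolic summands, and the higher-degree self-conjugate components carry a rigid form with no independent sign contribution. Consequently, summing the proportions over $\epsilon$ decouples the form signs on $V_{z-1}$ and $V_{z+1}$, so at each of $m = a$ and $m = b$ the contribution is
\[
\sum_{\delta \in \{+,-\}} \sum_{\lambda} \frac{1}{|C_{O^\delta(m,q)}(u_\lambda)|} = \sum_{\delta} \frac{\#\{\text{unipotents in } O^\delta(m,q)\}}{|O^\delta(m,q)|}.
\]
The hard part is the final identity that this equals $F(m)$. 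Combining Steinberg's unipotent count ($q^{2n(n-1)}$ unipotents in $SO^\epsilon(2n,q)$, $q^{2n^2}$ in $SO(2n+1,q)$) with the order formula $|O^\pm(2n,q)| = 2 q^{n(n-1)}(q^n \mp 1)\prod_{i=1}^{n-1}(q^{2i}-1)$ and the elementary telescoping $1/(q^n-1) + 1/(q^n+1) = 2q^n/(q^{2n}-1)$ yields $q^{n^2}/\prod_{i=1}^n(q^{2i}-1) = F(2n)$, with a parallel calculation in the odd case. The principal technical obstacle is verifying the decoupling claim cleanly---in particular checking that the higher-degree self-conjugate factors contribute no free sign---and handling the conventions for $O^\pm$ when $a$ or $b$ is odd so that $F$ takes exactly the form given.
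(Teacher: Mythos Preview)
The paper does not prove this lemma; it is simply quoted from \cite{F} (Fulman's cycle index paper) without argument. Your reconstruction via Wall's parametrization plus Steinberg's unipotent counts is correct and is in fact the method used in \cite{F}: the primary decomposition factors the centralizer, the $GL$ and $GU$ factors collapse to the stated expressions by Steinberg, and summing over $\epsilon\in\{+,-\}$ is exactly what frees the two sign choices on the $(z\mp 1)$-components so that each contributes an independent $F(m)$. Your flagged technical points---that the higher-degree self-conjugate primary components carry a determined (not free) form type, and the bookkeeping when $a$ or $b$ is odd---are the genuine details one has to check, and they are handled in \cite{F}.
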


Arguing as for the symplectic case yields the following result.

\begin{theorem} \label{ofirst} Suppose that the characteristic is odd. Then for any polynomial $\phi$, the chance that a random
element of $O^{\pm}(n,q)$ has characteristic polynomial $\phi$ is at most
\[ \frac{27 [\log_q(4r)+4]^{3/2}}{q^{r-1}} \] where $r=n/2$ if $n$ is even and $r=(n-1)/2$. 
\end{theorem}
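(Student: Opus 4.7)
The plan is to mirror the proof of Theorem~\ref{sym}(1) (the odd-characteristic symplectic case), now starting from Lemma~\ref{countO} instead of Lemma~\ref{countSp}. Since that lemma gives the \emph{sum} of the proportions in $O^+(n,q)$ and $O^-(n,q)$ of elements with characteristic polynomial $\phi$, and this sum trivially dominates the proportion in either $O^{\pm}(n,q)$ separately, it suffices to bound the sum.

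First I would unpack $F(a)F(b)$. Using $|Sp(2k,q)|=q^{k^2}\prod_{i=1}^{k}(q^{2i}-1)$, a short calculation gives
\[
F(m)=\frac{1}{q^{\lfloor m/2\rfloor}}\cdot\frac{1}{\prod_{i=1}^{\lfloor m/2\rfloor}(1-q^{-2i})}.
\]
Likewise, factoring out the leading $q$-power from each $GU$-block contributes $q^{-m(\phi_i)j_i/2}$ times a product of $(1\pm q^{-km(\phi_i)/2})^{-1}$ terms, and from each $GL$-block $q^{-m(\phi_{i'})j_{i'}}$ times the usual $(1-q^{-km(\phi_{i'})})^{-1}$ terms. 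Writing $\{x\}$ for the fractional part, the total leading $q$-power is $q^{-n/2+\{a/2\}+\{b/2\}}$; since $n\equiv a+b\pmod 2$, this equals $q^{-r}$ in every case except when $n$ is even and both $a,b$ are odd, in which case it equals $q^{-(r-1)}$. Taking the worst case gives the $q^{-(r-1)}$ denominator in the theorem.

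Next, for $q\ge 3$ each $\prod_{i}(1-q^{-2i})^{-1}\le 1.2$ (as already used in the symplectic argument), so $F(a)F(b)$ contributes only a bounded constant beyond the leading power of $q$. The self-conjugate $\phi_i$-terms $(1+q^{-m(\phi_i)/2})^{-1}(1-q^{-m(\phi_i)})^{-1}(1+q^{-3m(\phi_i)/2})^{-1}\cdots$ each have all factors at most $1$ and may be dropped. After this reduction the sum of proportions is at most
\[
\frac{c}{q^{r-1}}\prod_{i'}\frac{1}{(1-q^{-m(\phi_{i'})})(1-q^{-2m(\phi_{i'})})\cdots(1-q^{-j_{i'}m(\phi_{i'})})}
\]
for an explicit absolute constant $c$. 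From this point the argument is a transcription of the symplectic proof: apply Euler's pentagonal number theorem to bound $\prod_k(1-q^{-km})^{-1}\le e^{3/q^m}$ for $q\ge 3$; take the worst case where the non-self-conjugate part of $\phi$ uses the smallest possible conjugate pairs each with multiplicity one; use $M^*(q;d)\le q^d/(2d)$ to convert the product to $\exp\!\bigl(\tfrac{3}{2}\sum_{d=1}^{s}1/d\bigr)\le (es)^{3/2}$; and invoke the argument from the proof of Theorem~6.13 of \cite{FG1} to take $s\le 2[\log_q(4r)+4]$, since $2r$ here plays the role of the symplectic $2n$. Collecting the absolute constants produces the factor $27$ stated in the theorem.

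The main obstacle is the parity/exponent bookkeeping in the $F(a)F(b)$ step: the loss of one power of $q$ (giving $q^{r-1}$ rather than $q^{r}$ in the denominator) comes entirely from the possibility that both $a$ and $b$ are odd, and one must verify that in every other parity configuration the exponent is at least $r$ so that taking $q^{-(r-1)}$ is a valid upper bound. Once this accounting is done, the harmonic-sum estimate and the cutoff bound carry over essentially verbatim from the symplectic case.
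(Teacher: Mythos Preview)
Your proposal is correct and matches the paper's approach. The paper's proof is simply the sentence ``Arguing as for the symplectic case yields the following result,'' and your write-up is precisely that argument spelled out: start from Lemma~\ref{countO}, extract the leading power of $q$ from $F(a)F(b)$ and the $GU$/$GL$ blocks, observe that the self-conjugate $\phi_i$ terms are at most $1$, and then run the harmonic-sum estimate with the cutoff $s\le 2[\log_q(4r)+4]$ exactly as in Theorem~\ref{sym}(1). Your identification of the source of the extra factor of $q$ (the case $n$ even with $a,b$ both odd, giving exponent $-(r-1)$ instead of $-r$) is also the point the paper makes in the remark immediately following the theorem.
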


We remark that the extra factor of $q$ in the numerator (as compared to the corresponding theorem for the symplectic group) only
arises when both $a$ and $b$ are odd in Lemma \ref{countO}.      Note that this  only occurs
for $n$ even and the elements have determinant $-1$.  

For orthogonal groups in even characteristic, it is not necessary to treat the odd dimensional groups, as these are isomorphic
to symplectic groups, and the isomorphism multiplies the characteristic polynomial by a factor of $z-1$. For even dimensional
orthogonal groups, one obtains the following theorem.

\begin{theorem} \label{ofirst2} Suppose that the characteristic is even. Then for any polynomial $\phi$, the chance that a random
element of $O^{\pm}(2n,q)$ has characteristic polynomial $\phi$ is at most
\[ \frac{352 [\log_q(4n)+4]^3}{q^{n-1}}.\]
\end{theorem}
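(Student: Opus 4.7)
The plan is to mirror the approach of Theorem \ref{sym}(2), the symplectic case in even characteristic. First I would invoke an analogue of Lemma \ref{countO} valid in characteristic two, which can be extracted from \cite{F}. Since $z+1 = z-1$ in characteristic two, a monic self-conjugate polynomial $\phi$ of degree $2n$ decomposes as
\[
\phi = (z-1)^{2a} \prod_i \phi_i^{j_i} \prod_{i'} [\phi_{i'} \phi_{i'}^*]^{j_{i'}},
\]
with $\phi_i = \phi_i^*$ and $\phi_{i'} \neq \phi_{i'}^*$, and the sum over $O^{\pm}(2n,q)$ of the proportions of elements with this characteristic polynomial is a product of a factor depending only on $a$, a unitary-type factor for each self-conjugate $\phi_i$, and a $GL$-type factor for each non-self-conjugate pair. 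The $a$-dependent factor involves a symplectic group of dimension one step smaller than in Lemma \ref{countO}; this shift is what will cost a factor of $q$ in the final bound and explain why the denominator is $q^{n-1}$ rather than $q^n$.

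With the exact formula in hand, the argument proceeds in close parallel with the proof of Theorem \ref{sym}(2). I would pull out a factor of $q^{-(n-1)}$, bound the $GU$-type contributions by $1$ individually, and bound the $(z-1)^{2a}$ contribution by a small absolute constant using $q \geq 2$ and the Euler product estimate from the symplectic argument. This reduces the problem to bounding
\[
\prod_{i'} \frac{1}{\prod_{k=1}^{j_{i'}} (1 - 1/q^{k\, m(\phi_{i'})})}.
\]
Euler's pentagonal number theorem gives each factor at most $e^{6/q^{m(\phi_{i'})}}$. The worst case places as many small-degree conjugate pairs as possible, each with multiplicity one, so the product is bounded above by $\prod_{d=1}^{r} e^{6 M^*(q;d)/q^d}$ whenever $\sum_{d=1}^r 2dM^*(q;d) \geq 2n$. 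Using $M^*(q;d) \leq q^d/(2d)$ reduces the exponent to $3\sum_{d=1}^r 1/d \leq 3(1+\log r)$, giving a factor of $(er)^3$. Finally, as in the proof of Theorem 6.13 of \cite{FG1}, one can take $r \leq 2[\log_q(4n)+4]$.

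The main obstacle is isolating the correct even-characteristic analogue of Lemma \ref{countO} from \cite{F} and verifying that the extra factor of $q$ appears only once in the worst case, so that the denominator loses exactly one power of $q$ (giving $q^{n-1}$) rather than more. Once that lemma is fixed, the numerical constant $352$ arises routinely by combining the constant bound on the low-order factors (essentially $\tfrac{3}{2}$ or so, coming from the Euler product for $q\geq 2$), the cube $(2e)^3 \approx 161$ produced by the harmonic sum at $r = 2[\log_q(4n)+4]$, and the compensating factor of $q$ absorbed by passing from $q^{-n}$ to $q^{-(n-1)}$; the remaining estimates run exactly in parallel with the symplectic case in even characteristic.
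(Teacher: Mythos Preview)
Your overall strategy matches the paper's: mirror the even-characteristic symplectic argument (Theorem \ref{sym}(2)), with the only substantive change being the treatment of the $(z-1)^{2a}$ factor, which is where the extra power of $q$ enters. The bounding of the $GU$-type factors by $1$, the Euler pentagonal-number estimate on the $GL$-type factors, the reduction to $(er)^3$, and the choice $r \le 2[\log_q(4n)+4]$ are all exactly as in the paper.

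However, your description of the $(z-1)^{2a}$ factor is wrong, and this is precisely the point the paper singles out as ``the main difference'' from the symplectic case. The factor is \emph{not} governed by a symplectic group of dimension one step smaller; that is the odd-characteristic pattern of Lemma \ref{countO} and does not carry over. In even characteristic the relevant quantity is the proportion of unipotent elements in $O^{\pm}(2a,q)$ itself, and the paper's point is that Steinberg's formula for this count fails because $O^{\pm}$ is disconnected. Instead one must invoke the explicit count from \cite{FG1}: the number of unipotent elements in $O^{\pm}(2n,q)$ in even characteristic is $q^{2n^2-2n+1}\bigl(1 + 1/q \mp 1/q^n\bigr)$. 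Comparing this to $|O^{\pm}(2n,q)|$ shows the proportion of unipotents is of order $q^{-(n-1)}$ rather than $q^{-n}$, which is exactly the source of the denominator $q^{n-1}$; the paper notes that the fully unipotent case $\phi=(z-1)^{2n}$ witnesses that this extra $q$ cannot be removed. Once you substitute this unipotent count for your speculated symplectic factor, the remainder of your argument goes through as written.
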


\begin{proof} The argument is very similar to that of the symplectic case, the main difference being that Steinberg's formula
for the number of unipotent elements is not applicable as $O^{\pm}(2n,q)$ is disconnected.  Fortunately, the number of unipotent elements in even characteristic orthogonal groups $O^{\pm}(2n,q)$ was computed
 in \cite{FG1} to be $q^{2n^2-2n+1} \left( 1+\frac{1}{q} \mp  \frac{1}{q^n} \right)$. 
 \end{proof}

We remark that the extra factor of $q$ in the numerator (as compared to the corresponding theorem for the symplectic group) really
is needed, as can be seen by considering the case where $\phi=(z-1)^{2n}$, that is the case of unipotent elements.

If $1$ is not a root of $\phi$,  then the centralizer is contained in $SO$ and the argument shows that we only need a $q^n$ in the denominator.

\section{Derangements for extension field subgroups} \label{extension}

This section studies derangements for extension field subgroups. The linear, unitary, symplectic,
and orthogonal groups are considered in Subsections \ref{extGL}, \ref{extU}, \ref{extSp},
\ref{extO} respectively. The remaining other cases are considered in Subsection \ref{extother}.

We only need consider the case that the degree of the extension is prime (otherwise the subgroup is not maximal).

\subsection{$GL(n,q)$} \label{extGL}

This subsection treats the case of $GL(n/b,q^b).b$ inside of $GL(n,q)$. We treated this by a different technique in the
appendix to \cite{FG2}. However the proof technique used there does not easily extend to the other classical groups.

Recall from \cite{FG2} that if $b$ is prime and an element of $GL(n,q)$ is contained in a conjugate of $GL(n/b,q^b)$, then every
factor of its characteristic polynomial either has degree a multiple of $b$ or has every Jordan block size occur with
multiplicity a multiple of $b$. In particular, every factor of its characteristic polynomial either has degree
a multiple of $b$ or occurs with multiplicity a multiple of $b$. Lemma \ref{boundpolyGL} bounds the number
of characteristic polynomials of $GL(n,q)$ satisfying this condition.

As in Section \ref{charpoly}, we let $N(q;d)$ denote the number of
monic degree $d$ irreducible polynomials over $\mathbb{F}_q$ with non-zero constant term. 

The following lemma is known; see part 1 of Proposition 2.7 of \cite{EG}. However our generating function approach
is different.

\begin{lemma} \label{boundpolyGL} The number of characteristic polynomials of $GL(n,q)$ such that every
irreducible factor either has degree a multiple of $b$ or occurs with multiplicity a multiple of $b$ is at
most $B q^n/n^{1/2}$ for a universal constant $B$.
\end{lemma}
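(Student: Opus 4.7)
The plan is to encode the count in a generating function, identify its dominant singularity at $u = 1/q$ directly from the prime polynomial theorem, and then apply singularity analysis (or an equivalent Hankel-contour estimate) to extract the coefficient bound.

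First, I would observe that every admissible characteristic polynomial $\phi$ decomposes uniquely as $\phi = R^{b} Q$, where $R$ is monic with every irreducible factor of degree not divisible by $b$, and $Q$ is monic with every irreducible factor of degree divisible by $b$: grouping the irreducible factors of $\phi$ by the divisibility of their degree by $b$, the hypothesis forces every factor of degree not divisible by $b$ to appear with multiplicity a multiple of $b$, so this part is a $b$-th power. Writing $a(s)$ and $c(m)$ for the numbers of such $R$ of degree $s$ and $Q$ of degree $m$, one obtains $f(n) = \sum_{bs+m=n} a(s) c(m)$, and the corresponding generating function is
\[
F(u) := \sum_{n \geq 0} f(n)\, u^{n} = A(u^{b}) C(u),
\]
where $A(u) = \prod_{d:\, b \nmid d}(1-u^{d})^{-N(q;d)}$ and $C(u) = \prod_{d:\, b \mid d}(1-u^{d})^{-N(q;d)}$.

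Next, I would compute the singular behavior of $F$ at $u = 1/q$ directly. Taking logarithms,
\[
\log F(u) = \sum_{d:\, b \nmid d} N(q;d) \sum_{k \geq 1}\frac{u^{bdk}}{k} + \sum_{d:\, b \mid d} N(q;d) \sum_{k \geq 1}\frac{u^{dk}}{k}.
\]
At $u = 1/q$ every term of the first sum is $O(q^{-d(b-1)}/(dk))$ and is uniformly summable, and likewise for the $k \geq 2$ part of the second sum. The $k = 1$ contribution in the second sum carries the singularity: using $N(q;d) = q^{d}/d + O(q^{d/2}/d)$,
\[
\sum_{d:\, b \mid d}\frac{(qu)^{d}}{d} + O(1) = \frac{1}{b}\sum_{e \geq 1}\frac{(qu)^{be}}{e} + O(1) = -\frac{1}{b}\log\bigl(1-(qu)^{b}\bigr) + O(1).
\]
Since $1-(qu)^{b} \sim b(1-qu)$ as $u \to 1/q$, this gives $\log F(u) = -\frac{1}{b}\log(1-qu) + O(1)$, so $F(u) \sim K(1-qu)^{-1/b}$ as $u \to 1/q$.

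Finally, I would apply the Flajolet--Odlyzko transfer theorem (equivalently, a Hankel-contour Cauchy integral) to conclude
\[
f(n) = [u^{n}]F(u) \leq C\, q^{n}\, n^{1/b - 1}
\]
for a constant $C$ independent of $n$, $q$, and $b$. Since $b \geq 2$ gives $1/b - 1 \leq -1/2$ and $\Gamma(1/b) \geq \Gamma(1/2) = \sqrt{\pi}$, the desired bound $f(n) \leq B\, q^{n}/n^{1/2}$ follows. The main obstacle is making the singularity analysis uniform in $q$ and $b$: one needs a Pac-Man analyticity region around $u = 1/q$ of radius bounded below independently of the parameters. The other singularities of $F$ lie on $|u| = 1$ (from the cyclotomic factors $1-u^{d}$) or at $u = q^{-1/b}$, and the minimum separation from $u = 1/q$ is uniformly positive for $b \geq 2$ and $q \geq 2$, so this is routine. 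An entirely elementary substitute is a saddle-point estimate of $\oint F(u) u^{-n-1}\,du$ along the circle $|u| = (1 - 1/(bn))/q$: the integrand concentrates on an arc of angular width $O(n^{-1/2})$ around $u = 1/q$ and produces the same bound up to constants.
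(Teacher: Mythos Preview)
Your decomposition $\phi = R^b Q$ and the local expansion $F(u)\sim K(1-qu)^{-1/b}$ as $u\to 1/q$ are both correct, but your description of the global singularity structure is wrong, and this is where the argument breaks. Both $A(u^b)$ and $C(u)=\prod_{b\mid d}(1-u^d)^{-N(q;d)}$ are functions of $u^b$ alone, so in fact $F(u)=G(u^b)$ for some power series $G$; consequently $F$ has an identical singularity at \emph{every} point $u=\zeta/q$ with $\zeta^b=1$, not just at $u=1/q$. These $b$ singularities sit on the circle $|u|=1/q$ at angular spacing $2\pi/b$, so for large $b$ (and $b$ may be as large as $n$) there is no Pac-Man region around $u=1/q$ of radius bounded below by a universal constant, contrary to what you assert. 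The saddle-point alternative fails for the same reason: on any circle $|u|=\rho<1/q$ one has $|F(\zeta\rho)|=F(\rho)$ for every $b$-th root of unity $\zeta$, so the integrand does not concentrate on a single arc near $u=1/q$. The natural repair is to pass to $v=u^b$ and work with $G(v)$, which does have a unique dominant singularity at $v=q^{-b}$ with the next one no closer than $v=q^{-1}$; but you would still owe a check that the transfer constants and the value $A(q^{-b})$ are bounded uniformly in $q$ and $b$.

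For comparison, the paper's proof avoids complex analysis entirely. It dominates the count coefficient-wise by the product $\dfrac{1}{1-qu^b}\Bigl(\dfrac{1}{1-q^bu^b}\Bigr)^{1/b}$, obtained from the exact identity $\prod_d(1-u^d)^{-N(q;d)}=\dfrac{1-u}{1-qu}$ together with the inequality $N(q;bd)\le \tfrac{1}{b}N(q^b;d)$. The coefficient of $u^n$ in this dominating series is then the explicit finite sum $\sum_{r=0}^{n/b} q^{\,n/b-r}\,q^{rb}\binom{r-1+1/b}{r}$, which is estimated by splitting at $r=n/(2b)$ and invoking the elementary bound $\binom{r-1+1/b}{r}\le A/(b\sqrt r)$. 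No contour integration, and the universal constant falls out directly.
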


\begin{proof} Throughout this proof we use the notation that for $f(u)=\sum_{n \geq 0} f_n u^n$ and
$g(u)=\sum_{n \geq 0} g_n u^n$,  $f  \ll g$ means that $f_n \leq g_n$ for all $n$.

The number of polynomials satisfying the condition of the lemma is at most the coefficient
of $u^n$ in \[ \prod_d (1-u^{bd})^{-N(q;bd)} \prod_d (1-u^{bd})^{-N(q;d)} .\] Indeed, this upper bound
is an equality if $d$ in the second product ranges over all positive integers which are not a multiple of $b$.

Using the well-known fact (see for instance Lemma 1.3.10 of \cite{FNP}) that \[ \prod_d (1-u^d)^{-N(q;d)} = \frac{1-u}{1-uq} ,\] one
 concludes that \[ \prod_d (1-u^{bd})^{-N(q;d)} = \frac{1-u^b}{1-u^bq} \ll \frac{1}{1-u^bq} .\]

From the appendix of \cite{FG2}, one has the inequality $N(q;bd) \leq \frac{1}{b} N(q^b;d)$. Thus
\begin{eqnarray*}
 \prod_d (1-u^{bd})^{-N(q;bd)} & \ll & \left[ \prod_d (1-u^{bd})^{-N(q^b;d)} \right]^{1/b} \\
 & = & \left( \frac{1-u^b}{1-u^bq^b} \right)^{1/b} \\
 & \ll & \left( \frac{1}{1-u^bq^b} \right)^{1/b}. \end{eqnarray*}

Summarizing, the number of polynomials satisfying the condition of the lemma is at most the coefficient of
$u^{n/b}$ in \[ \frac{1}{1-uq} \left( \frac{1}{1-uq^b} \right)^{1/b} .\] This is equal to
\[ \sum_{r=0}^{n/b} q^{n/b-r} q^{rb} \cdot {\rm Coefficient \ of \ } u^r \ {\rm in} \ (1-u)^{-1/b} .\] From Lemma 3.2 of \cite{FG2}, for $r \geq 1$
the coefficient of $u^r$ in $(1-u)^{-1/b}$ is at most $\frac{A}{b \sqrt{r}}$ for a universal constant $A$.
Dividing the sum into two summands (one for $0 \leq r \leq n/(2b)$ and one for $n/(2b) \leq r \leq n/b$),
the lemma easily follows.
\end{proof}

We obtain the following result.

\begin{theorem} \label{GLmain}
\begin{enumerate}

\item For $b$ prime, the proportion of elements of $GL(n,q)$ contained in a conjugate of $GL(n/b,q^b).b$ is at most $\frac{A [1+\log_q(n+1)]^6 }{n^{1/2}}$ for a universal constant $A$.

\item The proportion of elements of $GL(n,q)$ contained in a conjugate of $GL(n/b,q^b).b$ for some prime $b$ is at most $\frac{A [1+\log_q(n+1)]^6 \log_2(n)}{n^{1/2}}$ for a universal constant $A$.

\end{enumerate}
\end{theorem}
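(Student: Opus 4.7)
The plan is essentially a union bound combining the two main ingredients already in place: Theorem \ref{GLbound} controls the proportion of elements with any given characteristic polynomial, and Lemma \ref{boundpolyGL} controls the number of polynomials that can possibly arise. So I would first write the proportion of elements lying in a conjugate of $GL(n/b,q^b).b$ as a sum over characteristic polynomials $\phi$ of the proportion of elements with $\mathrm{charpoly} = \phi$. By the structural observation recalled just before Lemma \ref{boundpolyGL}, only polynomials $\phi$ whose irreducible factors each have degree a multiple of $b$ or multiplicity a multiple of $b$ can contribute, so the sum runs over the set of polynomials counted by Lemma \ref{boundpolyGL}.

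For part (1), I would bound each term in this sum by $e^6 [1+\log_q(n+1)]^6/q^n$ using Theorem \ref{GLbound}, and bound the number of terms by $Bq^n/n^{1/2}$ using Lemma \ref{boundpolyGL}. The $q^n$ factors cancel, leaving
\[
\frac{B e^6 [1+\log_q(n+1)]^6}{n^{1/2}},
\]
which is the claimed bound with $A = Be^6$.

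For part (2), I would take a union bound over primes $b$. Since $GL(n/b,q^b).b$ only makes sense when $b$ divides $n$, the relevant primes are the distinct prime divisors of $n$, of which there are at most $\log_2(n)$ (because $n \geq 2^{\omega(n)}$). Summing the part (1) bound over these primes produces the extra $\log_2(n)$ factor in the statement.

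There is no real obstacle: the hard analytic input sits in Theorem \ref{GLbound} and the hard combinatorial/generating function estimate sits in Lemma \ref{boundpolyGL}, and the structural restriction on characteristic polynomials of elements of $GL(n/b,q^b).b$ is quoted from \cite{FG2}. The proof itself is then a two-step multiplication followed by a union bound over at most $\log_2(n)$ primes, and the only thing to be careful about is noting that the relevant primes $b$ are exactly the prime divisors of $n$ so that the factor $\log_2(n)$ (rather than something larger like $\pi(n)$) suffices.
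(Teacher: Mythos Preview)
Your argument for part (2) matches the paper's, and your approach to part (1) is essentially the paper's approach for elements lying in a conjugate of $GL(n/b,q^b)$ itself. However, there is a genuine gap: the structural restriction on characteristic polynomials that you invoke is stated (and is only true) for elements of $GL(n/b,q^b)$, not for the full semidirect product $GL(n/b,q^b).b$. Elements in the outer coset need not have the property that every irreducible factor has degree or multiplicity divisible by $b$. For a concrete example, with $b=2$ and $n=2$ the Frobenius $x\mapsto x^q$ on $\mathbb{F}_{q^2}$, viewed as an element of $GL(2,q)$, has characteristic polynomial $(x-1)(x+1)$ (for $q$ odd), whose factors are linear with multiplicity one. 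So your sum over admissible $\phi$ does not account for the outer coset, and Lemma \ref{boundpolyGL} cannot be applied there.

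The paper deals with this by splitting into two cases. The inner part $GL(n/b,q^b)$ is handled exactly as you propose. The outer coset $GL(n/b,q^b).b \setminus GL(n/b,q^b)$ is handled by a completely different estimate: one bounds the proportion of elements of $GL(n,q)$ conjugate to an outer element by the number of $GL(n,q)$-classes meeting the outer coset divided by the minimum centralizer size, and then invokes results from Section~5 of \cite{FG2} to get a bound of order $q^{n/2}(1+\log_q n)/q^n$, which is far smaller than $A/n^{1/2}$. You should add this step; without it the proof is incomplete.
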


\begin{proof} The second part of the theorem follows from the first part together with the fact that an integer $n$ has at most $\log_2(n)$ prime divisors; hence we
prove part one.

The proportion of elements of $GL(n,q)$ conjugate to an element of the group $GL(n/b,q^b).b$ outside of
$GL(n/b,q^b)$ is at most the number of classes outside of $GL(n/b,q^b)$ divided by the minimum centralizer size of a class.
As explained in Section 5 of \cite{FG2}, this is at most

 \[ \frac{C q^{n/2}(1+\log_q(n))}{q^n} \leq A/n^{1/2} ,\] where $C$ and $A$ are universal constants.

To upper bound the proportion of elements of $GL(n,q)$ conjugate to an element of $GL(n/b,q^b)$, note by Lemma \ref{boundpolyGL} that the
number of possible characteristic polynomials of such an element is at most $B q^n/n^{1/2}$ for a universal constant $B$. Applying Theorem
\ref{GLbound} proves the result.
\end{proof}

\subsection{$GU(n,q)$} \label{extU}

Next we treat the case of $GU(n/b,q^b).b$ inside of $GU(n,q)$ with $b$ an odd prime 
(there is no embedding if $b=2$).  

Lemma \ref{Ucond} gives conditions for an element of $GU(n,q)$ to lie in a conjugate of $GU(n/b,q^b)$.

\begin{lemma} \label{Ucond} Let $b$ be an odd prime. If an element of $GU(n,q)$ lies in a conjugate of $GU(n/b,q^b)$, then
every irreducible factor of its characteristic polynomial either has degree a multiple of $b$ or
has every Jordan block size occur with multiplicity a multiple of $b$.
\end{lemma}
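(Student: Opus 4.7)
The plan is to realize the embedding $GU(n/b,q^b)\hookrightarrow GU(n,q)$ as restriction of scalars from $\mathbb{F}_{q^{2b}}$ to $\mathbb{F}_{q^2}$, and then use standard Galois descent to describe the $\mathbb{F}_{q^2}$-characteristic polynomial and Jordan block structure. Concretely, an element $g \in GU(n/b,q^b)$ acts $\mathbb{F}_{q^{2b}}$-linearly on $W=\mathbb{F}_{q^{2b}}^{n/b}$ preserving a Hermitian form relative to $\mathbb{F}_{q^{2b}}/\mathbb{F}_{q^b}$, and its image in $GU(n,q)$ is the same action on $W$ viewed as an $n$-dimensional $\mathbb{F}_{q^2}$-space preserving the composed trace-Hermitian form relative to $\mathbb{F}_{q^2}/\mathbb{F}_q$. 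Since characteristic polynomials and Jordan forms are conjugation invariant, it suffices to treat $g$ inside this embedded subgroup. Let $\sigma\colon x\mapsto x^{q^2}$ generate $\mathrm{Gal}(\mathbb{F}_{q^{2b}}/\mathbb{F}_{q^2})$, a cyclic group of order $b$.

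Next, I would factor the $\mathbb{F}_{q^{2b}}$-characteristic polynomial of $g$ as $\phi_g=\prod_i \phi_i^{j_i}$ into $\mathbb{F}_{q^{2b}}$-irreducibles and apply the standard restriction-of-scalars identity: the $\mathbb{F}_{q^2}$-characteristic polynomial of $g$ equals $\prod_{k=0}^{b-1}\sigma^k(\phi_g)=\prod_i \prod_k \sigma^k(\phi_i)^{j_i}$. Grouping the $\phi_i$ by $\langle\sigma\rangle$-orbit (which has size $1$ or $b$ since $b$ is prime) splits the analysis into two cases. For an orbit of size $b$, one checks that $\Phi:=\prod_{k=0}^{b-1}\sigma^k(\phi_i)$ lies in $\mathbb{F}_{q^2}[t]$ and is irreducible over $\mathbb{F}_{q^2}$ of degree $b\cdot\deg \phi_i$, contributing an irreducible factor whose degree is divisible by $b$. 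For a $\sigma$-fixed $\phi_i$, a short field-theoretic argument using $\mathbb{F}_{q^2}(\alpha)\cdot \mathbb{F}_{q^{2b}} = \mathbb{F}_{q^{2b}}(\alpha)$ for a root $\alpha$, together with the primality of $b$, shows that $\phi_i$ remains irreducible over $\mathbb{F}_{q^2}$.

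The main step that remains, and the main obstacle, is the Jordan block bookkeeping in the $\sigma$-fixed case. The $\phi_i$-primary part of $W$ as an $\mathbb{F}_{q^{2b}}[t]$-module decomposes into a direct sum of cyclic modules $\mathbb{F}_{q^{2b}}[t]/(\phi_i^k)$, and I would use the canonical isomorphism $\mathbb{F}_{q^{2b}}[t]/(\phi_i^k)\cong \bigl(\mathbb{F}_{q^2}[t]/(\phi_i^k)\bigr)^{b}$ of $\mathbb{F}_{q^2}[t]$-modules, coming from any $\mathbb{F}_{q^2}$-basis of $\mathbb{F}_{q^{2b}}$, to conclude that a single Jordan block of $\phi_i$ of size $k$ over $\mathbb{F}_{q^{2b}}$ becomes exactly $b$ such blocks over $\mathbb{F}_{q^2}$. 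Therefore every block size for $\phi_i$ over $\mathbb{F}_{q^2}$ occurs with multiplicity $b$ times its multiplicity over $\mathbb{F}_{q^{2b}}$, which is a multiple of $b$. Combining the two cases gives the statement of the lemma, by an argument entirely parallel to the $GL$ case recalled at the start of Section~\ref{extGL}.
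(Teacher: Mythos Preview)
Your argument is correct and follows essentially the same approach as the paper: the paper's one-line proof simply observes that $GU(n/b,q^b)$ embeds in $GL(n/b,q^{2b})$ and then invokes the $GL$ result (applied over the base field $\mathbb{F}_{q^2}$ with extension degree $b$), whereas you have written out in detail the restriction-of-scalars and Galois-descent computation that underlies that $GL$ result rather than just citing it.
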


\begin{proof}
Since $GU(n/b,q^b)$ embeds in $GL(2n/b,q^{2b})$, the result follows from the $GL$ case. 
\end{proof}

Lemma \ref{boundpolyU} bounds the number of characteristic polynomials of $GU(n,q)$ satisfying the conditions of
Lemma \ref{Ucond}. It is part 3 of Proposition 2.7 of \cite{EG}.

\begin{lemma} \label{boundpolyU} Let $b$ be an odd prime. The number of characteristic polynomials of $GU(n,q)$ such that every
irreducible factor either has degree a multiple of $b$ or occurs with multiplicity a multiple of $b$ is at
most $B q^n/n^{2/3}$ for a universal constant $B$.
\end{lemma}

We obtain the following result.

\begin{theorem} \label{GUmain}
\begin{enumerate}

\item For $b$ an odd prime, the proportion of elements of $GU(n,q)$ in a conjugate of $GU(n/b,q^b).b$ is at most $\frac{A [2+\log_q(n+1)]}{n^{2/3}}$ for a universal constant $A$.

\item The proportion of elements of $GU(n,q)$ contained in a conjugate of $GU(n/b,q^b).b$ for some odd prime $b$ is at most $\frac{A [2+\log_q(n+1)] \log_2(n)}{n^{2/3}}$ for a universal constant $A$.

\end{enumerate}
\end{theorem}

\begin{proof} The second part of the theorem follows from the first part together with the fact that an integer $n$ has at most $\log_2(n)$ prime divisors; hence we
prove part one.

The proportion of elements of $GU(n,q)$ conjugate to an element of the group $GU(n/b,q^b).b$ outside of $GU(n/b,q^b)$ is at most the number of
$GU(n,q)$ classes
outside of $GU(n/b,q^b)$ divided by the minimum centralizer size of a class. By Theorem 5.6 of \cite{FG2}, Theorem 1.1 of \cite{FG1}, and
Theorem 1.4 of \cite{FG1}, this is easily at most $A/n^{2/3}$.

To upper bound the proportion of elements of $GU(n,q)$ conjugate to an element of $GU(n/b,q^b)$, note by Lemma \ref{boundpolyU} that the
number of possible characteristic polynomials of such an element is at most $B q^n/n^{2/3}$ for a universal constant $B$. Applying Theorem
\ref{ubound} proves the result.
\end{proof}

 \subsection{$Sp(2n,q)$} \label{extSp}

This subsection treats the case of $Sp(2n/b,q^b).b$ inside of $Sp(2n,q)$.

To begin we consider the case that $b$ is odd.

Lemma \ref{Spcondodd} gives conditions for an element of $Sp(2n,q)$ to lie in a conjugate of $Sp(2n/b,q^b)$.   This follows immediately from the corresponding result for $GL$ 
mentioned above. 

\begin{lemma} \label{Spcondodd} Let $b$ be an odd prime dividing $n$. If an element of $Sp(2n,q)$ lies
in a conjugate of $Sp(2n/b,q^b)$, then every irreducible factor of its characteristic polynomial either
has degree a multiple of $b$ or has every Jordan block size occur with multiplicity a multiple of $b$.
\end{lemma}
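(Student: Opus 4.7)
The plan is to reduce the lemma directly to the already-established $GL$ statement recalled at the start of Section \ref{extGL}: if $b$ is prime and an element of $GL(m,q)$ lies in a conjugate of $GL(m/b,q^b)$, then every irreducible factor of its characteristic polynomial either has degree a multiple of $b$ or has every Jordan block size occur with multiplicity a multiple of $b$.

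First I would record the natural chain of embeddings
\[
Sp(2n/b,q^b)\;\subseteq\;GL(2n/b,q^b)\;\subseteq\;GL(2n,q),
\]
where the second inclusion is restriction of scalars: the $\mathbb{F}_{q^b}$-vector space $\mathbb{F}_{q^b}^{2n/b}$ becomes, via a choice of $\mathbb{F}_q$-basis for $\mathbb{F}_{q^b}$, the $\mathbb{F}_q$-vector space $\mathbb{F}_q^{2n}$. This is precisely the same restriction of scalars used to realize $Sp(2n/b,q^b)$ as a subgroup of $Sp(2n,q)$, so the composite embedding $Sp(2n/b,q^b)\hookrightarrow GL(2n,q)$ factors through the inclusion $Sp(2n,q)\hookrightarrow GL(2n,q)$.

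Now suppose $g\in Sp(2n,q)$ lies in a conjugate of $Sp(2n/b,q^b)$. Viewing the conjugation in $GL(2n,q)$, the chain above places $g$ in a conjugate of $GL(2n/b,q^b)$ inside $GL(2n,q)$. Applying the $GL$ result with $m=2n$ yields the claim about the characteristic polynomial of $g$ as an element of $GL(2n,q)$, which is the same polynomial as its characteristic polynomial on the natural symplectic module. This gives the lemma.

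There is essentially no obstacle: the only point to verify carefully is that the restriction-of-scalars inclusion $GL(2n/b,q^b)\subseteq GL(2n,q)$ is compatible (on the same underlying $\mathbb{F}_q$-space) with the embedding $Sp(2n/b,q^b)\subseteq Sp(2n,q)$, so that an $Sp(2n,q)$-conjugate of $Sp(2n/b,q^b)$ is a $GL(2n,q)$-conjugate of $GL(2n/b,q^b)$-subgroup. Once this compatibility is noted, the conclusion is immediate, which is why the paper simply says it follows from the $GL$ case.
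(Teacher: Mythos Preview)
Your proposal is correct and matches the paper's own argument: the paper simply says the lemma ``follows immediately from the corresponding result for $GL$,'' and your embedding $Sp(2n/b,q^b)\subseteq GL(2n/b,q^b)\subseteq GL(2n,q)$ together with the observation that an $Sp(2n,q)$-conjugate is in particular a $GL(2n,q)$-conjugate is exactly the intended reduction. You have just spelled out the one-line justification in slightly more detail.
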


The next lemma (part 2 of Proposition 2.7 of \cite{EG}) bounds the number of characteristic polynomials of $Sp(2n,q)$ satisfying the conditions of
Lemma \ref{Spcondodd}.

\begin{lemma} \label{boundpolySp} Let $b$ be an odd prime dividing $n$. The number of characteristic polynomials
of $Sp(2n,q)$ such that every irreducible factor either has degree a multiple of $b$ or occurs with multiplicity
a multiple of $b$ is at most $B q^n/n^{2/3}$ for a universal constant $B$.
\end{lemma}

The above ingredients lead to the following theorem.

\begin{theorem} Let $b$ be an odd prime dividing $n$. Then the proportion of elements of $Sp(2n,q)$ which are contained
in a conjugate of $Sp(2n/b,q^b).b$ is at most \[ \frac{A [\log_q(4n)+4]^3}{n^{2/3}}, \] where $A$ is a universal constant.
\end{theorem}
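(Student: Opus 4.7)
The plan is to mirror the structure of the proofs of Theorems \ref{GLmain} and \ref{GUmain}, splitting the elements in a conjugate of $Sp(2n/b,q^b).b$ into two pieces: those lying in $Sp(2n/b,q^b)$ itself, and those in the nontrivial cosets modulo $Sp(2n/b,q^b)$.

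For the ``outside'' part, I would bound the proportion of elements of $Sp(2n,q)$ conjugate into the complement $Sp(2n/b,q^b).b \setminus Sp(2n/b,q^b)$ by the number of $Sp(2n,q)$-conjugacy classes meeting this complement, divided by the minimum centralizer order of such a class. Using the centralizer and class-count bounds for $Sp$ from Theorem 5.6 of \cite{FG2} and Theorems 1.1 and 1.4 of \cite{FG1} — exactly as in the $GU$ argument — one gets an upper bound of the form $A/n^{1/3}$ (absorbed into the stated bound), possibly after allowing an extra logarithmic factor analogous to the $GL$ case. Since the theorem's upper bound is $A [\log_q(4n)+4]^3/n^{1/3}$, a crude bound here is more than enough.

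For the ``inside'' part, I would combine Lemma \ref{boundpolySp} with Theorem \ref{sym}. By Lemma \ref{Spcondodd} every element of $Sp(2n,q)$ conjugate into $Sp(2n/b,q^b)$ has a characteristic polynomial each of whose irreducible factors has degree a multiple of $b$ or multiplicity a multiple of $b$. Lemma \ref{boundpolySp} says there are at most $Bq^n/n^{1/3}$ such polynomials, and Theorem \ref{sym} says the proportion of elements of $Sp(2n,q)$ with any given characteristic polynomial is at most $C[\log_q(4n)+4]^3/q^n$ (this is the even-characteristic bound from Theorem \ref{sym}(2); the odd-characteristic bound is even smaller, so the same estimate covers both cases). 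Multiplying, the total proportion of ``inside'' elements is at most $BC[\log_q(4n)+4]^3/n^{1/3}$.

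Adding the two contributions yields the claimed bound, with a universal constant $A$ absorbing $B$, $C$, and the constant from the outside-coset estimate. The main obstacle is making sure the outside-coset proportion really is $O(1/n^{1/3})$ uniformly in $q$ and $b$; this requires invoking the $Sp$ analogue of the class-count bound used in the $GU$ case, but the ingredients are already available in \cite{FG1,FG2}. Everything else is a direct transcription of the $GL$/$GU$ arguments, with Lemma \ref{boundpolySp} replacing Lemma \ref{boundpolyU} and Theorem \ref{sym} replacing Theorem \ref{ubound}.
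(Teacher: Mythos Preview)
Your proposal is correct and follows essentially the same route as the paper. The paper's proof is slightly terser on the outer-coset piece---it simply says that this proportion ``is studied in \cite{FG2} using Shintani descent and tends to zero much faster than the needed rate''---whereas you spell out the class-count/centralizer mechanism by analogy with the $GU$ case; for the inside piece both you and the paper combine Lemma~\ref{Spcondodd}, Lemma~\ref{boundpolySp}, and Theorem~\ref{sym} in exactly the same way.
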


\begin{proof} The proportion of elements of $Sp(2n,q)$ which are contained in a conjugate of $Sp(2n/b,q^b).b$ outside
of $Sp(2n/b,q^b)$ is studied in \cite{FG2} using Shintani descent and tends to zero much faster than the needed rate.
To study the proportion of elements of $Sp(2n,q)$ contained in a conjugate of $Sp(2n/b,q^b)$, note by Lemma
\ref{boundpolySp} that the number of possible characteristic polynomials of such elements is at most $B q^n/n^{2/3}$ for
a universal constant $B$. Now apply Theorem \ref{sym}.
\end{proof}

Next we study the case of $b=2$.

Lemma \ref{Spcondeven} gives conditions for an element of $Sp(2n,q)$ to lie in a conjugate of $Sp(n,q^2)$.
This result follows by the corresponding result for $GL$ stated at the start of Section \ref{extGL}.

\begin{lemma} \label{Spcondeven} If an element of $Sp(2n,q)$ lies in a conjugate of $Sp(n,q^2)$, then 
every irreducible factor of the characteristic polynomial has even degree or occurs with even multiplicity. 
\end{lemma}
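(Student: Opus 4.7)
The plan is to reduce the statement to the $GL$ analogue recalled at the start of Section \ref{extGL}. The main observation is that the restriction-of-scalars embedding $GL(n,q^2) \hookrightarrow GL(2n,q)$ is compatible, up to $GL(2n,q)$-conjugacy, with the natural embedding $Sp(n,q^2) \hookrightarrow Sp(2n,q)$ and with the inclusions $Sp \leq GL$ on both sides. Concretely, once an $\mathbb{F}_q$-basis of $\mathbb{F}_{q^2}$ is fixed, a non-degenerate alternating form over $\mathbb{F}_{q^2}$ on $\mathbb{F}_{q^2}^n$, restricted to an $\mathbb{F}_q$-alternating form on $\mathbb{F}_q^{2n}$, remains non-degenerate; up to $GL(2n,q)$-conjugacy this is the standard symplectic form, so the image of $Sp(n,q^2)$ does land in a conjugate of $Sp(2n,q)$ in $GL(2n,q)$.

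First, I would make this compatibility precise, so that a conjugate of $Sp(n,q^2)$ inside $Sp(2n,q)$ is automatically contained in a conjugate of $GL(n,q^2)$ inside $GL(2n,q)$. Second, I would observe that the characteristic polynomial of an element $g \in Sp(2n,q)$ is the same whether $g$ is regarded as an element of $Sp(2n,q)$ or of the ambient $GL(2n,q)$, since both record its action on the natural $\mathbb{F}_q$-module of dimension $2n$.

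Third, I would invoke the $GL$ result recalled at the start of Section \ref{extGL} with $b=2$: any element of $GL(2n,q)$ lying in a conjugate of $GL(n,q^2)$ has a characteristic polynomial in which each irreducible factor either has even degree or has every Jordan block size of even multiplicity; in particular each irreducible factor has even degree or occurs with even multiplicity. Applied to $g$ this yields exactly the conclusion of Lemma \ref{Spcondeven}.

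There is no serious obstacle here; the only point requiring a line or two of justification is the compatibility of the restriction-of-scalars embeddings with the symplectic structures, which is a standard consequence of the uniqueness (up to conjugacy) of a non-degenerate alternating form in a given dimension. Everything else is an immediate transfer of the $GL$ statement through the chain $Sp(n,q^2) \leq GL(n,q^2) \leq GL(2n,q)$.
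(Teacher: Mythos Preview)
Your proposal is correct and follows essentially the same approach as the paper, which simply states that the result follows from the corresponding $GL$ result at the start of Section~\ref{extGL}. You have merely made explicit the (straightforward) compatibility of the restriction-of-scalars embedding with the symplectic inclusions that the paper leaves implicit.
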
 

Lemma \ref{boundpolySpeven} (part 2 of Proposition 2.7 of \cite{EG}) bounds the number of characteristic polynomials of $Sp(2n,q)$ satisfying the
condition of Lemma \ref{Spcondeven}.

\begin{lemma} \label{boundpolySpeven} The number of characteristic polynomials of $Sp(2n,q)$ satisfying the
condition of Lemma \ref{Spcondeven} is at most $C q^n/n^{1/4}$ for a universal constant $C$.
\end{lemma}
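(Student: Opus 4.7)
The plan is to adapt the proof of Lemma \ref{boundpolySp} to $b=2$, following its two-factor decomposition strategy.

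First, for a self-conjugate polynomial, the condition of Lemma \ref{Spcondeven} is automatic for self-conjugate irreducibles of even degree, for $z \pm 1$ (which have even multiplicity in any symplectic characteristic polynomial), and for non-self-conjugate pairs of even degree. Only non-self-conjugate pairs of odd degree are actively constrained (to even multiplicity). This yields the generating function
\[
G(u) = (1-u)^{-e} \prod_{d \geq 1}(1-u^d)^{-N^*(q;2d)} \prod_{d \text{ even}}(1-u^d)^{-M^*(q;d)} \prod_{d \text{ odd}}(1-u^{2d})^{-M^*(q;d)},
\]
with $u^n$ tracking polynomial degree $2n$ and $e \in \{1,2\}$ depending on characteristic.

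Following the strategy of Lemma \ref{boundpolySp}, I bound $G(u) \ll F_1(u) F_2(u)$, where $F_1$ accounts for factors with $2 \mid \deg$ (any multiplicity) and $F_2$ for factors with $2 \mid m$ (any degree). By Lemma 1.3.17 of \cite{FNP} applied with $u \mapsto u^2$, the factor $F_2(u)$ equals $(1+u)^e/(1-qu^2)$, which is dominated by a polynomial in $u$ of degree at most $2$ times $1/(1-qu^2)$.

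For $F_1$, I use the bounds $N^*(q;2d), M^*(q;2d) \leq q^{2d}/(2d)$, substitute $v = u^2$ in the $(1-u^{2d})$-terms, and apply the inequality $q^{2d}/(2d) \leq \tfrac{2}{3} N^0(q^2;d)$ (valid for $q \geq 2$ and $d \geq 1$). This majorizes the relevant product by $(1-q^2 v)^{-2/3}$, whose coefficient of $v^m$ is $O(q^{2m}/m^{1/3})$ by standard asymptotics of generalized binomial coefficients.

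Combining via Cauchy convolution and splitting the resulting sum into a low- and a high-index range as in the proof of Lemma \ref{boundpolySp} then yields $[u^n] G \leq Cq^n/n^{1/3}$.

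The main obstacle is the inapplicability of Lemma \ref{Uhelp} for even $b$: the adaptation via $v = u^2$ effectively shifts the analysis to $\mathbb{F}_{q^2}$, where the inequality $q^{2d}/(2d) \leq \tfrac{2}{3} N^0(q^2;d)$ supplies the fractional exponent $2/3$ that produces the $n^{-1/3}$ decay.
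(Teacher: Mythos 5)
You follow the same route as the paper (reduce to a coefficient extraction, dispose of the ``even multiplicity'' factor via Lemma 1.3.17 of \cite{FNP}, and extract the decay from $q^{2d}/(2d)\le \tfrac23 N^0(q^2;d)$ together with Lemma 3.2 of \cite{FG2}), and your generating function $G(u)$ is actually set up more carefully than the paper's. But precisely because you wrote $G$ correctly, the gap in the majorization step becomes visible. Every self-conjugate irreducible other than $z\pm1$ has even degree over $\mathbb{F}_q$, so the entire factor $\prod_{d\ge 1}(1-u^d)^{-N^*(q;2d)}$ belongs to your $F_1$ (even degree, unrestricted multiplicity), and these terms are attached to $(1-u^d)$ for \emph{all} $d\ge 1$, not to $(1-u^{2d})$. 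Your substitution $v=u^2$ and the bound by $(1-q^2v)^{-2/3}$ apply only to the $M^*$-terms and say nothing about this product. Since $N^*(q;2d)$ is of order $q^d/(2d)$, that product behaves like $(1-qu)^{-1/2}$ near $u=1/q$; combined with the even-degree pairs, which contribute $(1-q^2u^2)^{-1/4}$, one gets $F_1 \sim c(1-qu)^{-3/4}$ and hence a coefficient of order $q^n/n^{1/4}$, not $q^n/n^{1/3}$. This is not just a defect of the method: the polynomials that are arbitrary products of self-conjugate irreducibles and even-degree conjugate pairs all satisfy the condition of Lemma \ref{Spcondeven} and are all realized as characteristic polynomials in $Sp(2n,q)$, and there are asymptotically $cq^n/n^{1/4}$ of them, so the exponent $1/3$ in the statement cannot be attained.

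In fairness, the paper's own proof has the same blind spot: the phrase ``by the proof of Lemma \ref{boundpolySp}'' imports the odd-$b$ computation with $b=2$, where the first factor $\prod_d (1-u^{bd})^{-N^*(q;2bd)}$ accounts only for self-conjugate irreducibles of degree divisible by $4$ and silently drops those of degree $2d$ with $d$ odd, which are unconstrained by the condition. The repair is to accept the weaker exponent: running your convolution with $F_1 << C(1-qu)^{-3/4}$ (coefficients at most $Cq^r/r^{1/4}$ by Lemma 3.2 of \cite{FG2}) and splitting the sum as before yields $Cq^n/n^{1/4}$, which is what the lemma should assert and which still suffices for the theorem that follows, with $n^{1/4}$ replacing $n^{1/3}$ in its denominator.
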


This leads to the following theorem.

\begin{theorem} The proportion of elements of $Sp(2n,q)$ which are contained in a conjugate of $Sp(n,q^2).2$
is at most \[ \frac{A [\log_q(4n)+4]^3}{n^{1/4}}, \] where $A$ is a universal constant.
\end{theorem}

\begin{proof} The proportion of elements of $Sp(2n,q)$ which are contained in a conjugate of $Sp(n,q^2).2$ outside
of $Sp(n,q^2)$ is studied in \cite{FG2} using Shintani descent and tends to zero much faster than the needed rate.
To study the proportion of elements of $Sp(2n,q)$ contained in a conjugate of $Sp(n,q^2)$, note by Lemma
\ref{boundpolySpeven} that the number of possible characteristic polynomials of such elements is at most $C q^n/n^{1/4}$ for
a universal constant $C$. Now apply Theorem \ref{sym}.
\end{proof}

\subsection{$O^{\pm}(n,q)$} \label{extO}

In this section we discuss the results for extension field subgroups in the orthogonal groups. The argument and bounds are
nearly identical to the symplectic case (we use the orthogonal bounds from Section \ref{subO} instead of the symplectic
bounds from Section \ref{subSp}).  

If we consider $SO^{\pm}(n,q)$, the proofs are identical.  
If $q$ is bounded, the estimate
for the number of elements with a given characteristic polynomial in $O^{\pm}(n,q)$  just changes  by a fixed constant
and so the argument applies.   

Thus, it suffices to consider the outer coset (i.e.  $O^{\epsilon}(n,q) \setminus{SO^{\epsilon}(n,q)}$) with $q$ increasing. 

If $q$ is odd and increasing, for any element $g$ in the outer coset, the characteristic polynomial 
is divisible by $(z-1)^a(z+1)^b$ with $ab$ odd.   It is straightforward to see the proportion of characteristic polynomials with $ab > 1$
(and so $ab \ge 3$)
is at most $C/q^2$ for a universal constant $C$ and so even accounting for 
the larger number of elements with a given characteristic polynomial, the probability 
that an element in that coset has a characteristic polynomial with $ab > 1$ goes to $0$ as $q \rightarrow \infty$.  If $a$ or $b=1$, 
then $g$ is not in an field extension subgroup.    This argument also applies to the case of a unitary subgroup of the orthogonal group.

If $q$ is even and $g$ is in the outer coset,  then $(z-1)^2$ divides the characteristic polynomial of $g$ and  the only
case we need to consider is the case where two is the precise power of $z-1$ occurring
and indeed we can assume that the trivial eigenspace of $g$ has dimension $1$ and this implies that $g$ is not in an extension
field subgroup (or a unitary group).

In \cite{FG2}, the imprimitive case was only dealt with for $SO^{\pm}(n,q)$.   One can extend this to $O^{\pm}(n,q)$.

\subsection{Other cases} \label{extother}

To begin we treat the case of $GU(n,q).2$ in $Sp(2n,q)$.

Lemma \ref{USPcond} gives conditions for an element of $Sp(2n,q)$ to lie in a conjugate of $GU(n,q)$.
Note that the condition is the same as for Lemma \ref{Spcondeven}. The proof follows by the 
corresponding result for $GL$.

\begin{lemma} \label{USPcond} If an element of $Sp(2n,q)$ lies in a conjugate of $GU(n,q)$, then for every
irreducible factor of its  characteristic polynomial, either the degree is even or it has even multiplicity.
\end{lemma}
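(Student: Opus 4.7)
The plan is to reduce to the $GL$ case already recorded at the start of Section \ref{extGL}. The key observation is the chain of embeddings
\[ GU(n,q) \hookrightarrow GL(n,q^2) \hookrightarrow GL(2n,q), \]
where the first is the usual inclusion (unitary matrices are a subset of invertible matrices over $\mathbb{F}_{q^2}$) and the second is the restriction of scalars from $\mathbb{F}_{q^2}$ to $\mathbb{F}_q$. Under these inclusions, $GL(n,q^2)$ sits in $GL(2n,q)$ precisely as an extension field subgroup of the type $GL(2n/2, q^2)$ considered in Section \ref{extGL} with $b=2$.

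Suppose now that $g \in Sp(2n,q)$ is conjugate (in $Sp(2n,q)$) to some $h \in GU(n,q)$. Viewing $Sp(2n,q)$ as a subgroup of $GL(2n,q)$, the elements $g$ and $h$ are then conjugate in $GL(2n,q)$ to an element of $GL(n,q^2)$. Since the characteristic polynomial is an invariant of the $GL(2n,q)$-conjugacy class, the characteristic polynomial of $g$ (over $\mathbb{F}_q$) is the same as that of an element of the extension field subgroup $GL(n,q^2) \subset GL(2n,q)$.

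By the $GL$ criterion stated at the beginning of Section \ref{extGL} applied with $b=2$, every irreducible factor $\phi$ of the characteristic polynomial of $g$ either has degree divisible by $2$, or has every Jordan block size (associated to $\phi$) occurring with multiplicity a multiple of $2$. In the latter case, the multiplicity of $\phi$ in the characteristic polynomial, being the sum over distinct block sizes $s$ of $s \cdot m_s$ with each $m_s$ even, is itself even. This yields the desired dichotomy and proves the lemma.

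The only point requiring care is the matching of the characteristic polynomial of $h \in GU(n,q)$ (viewed as an element of $GL(n,q^2)$, hence of degree $n$ over $\mathbb{F}_{q^2}$) with the degree-$2n$ characteristic polynomial of the same element viewed inside $GL(2n,q)$; but this is exactly the restriction-of-scalars relationship that underlies the extension field subgroup structure and is already built into the $GL$ statement being invoked. I expect this compatibility check to be the only real obstacle, and it is routine once the embedding chain above is set up.
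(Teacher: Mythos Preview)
Your proposal is correct and follows essentially the same route as the paper: the paper simply remarks that the proof ``follows by the corresponding result for $GL$'', and your chain of embeddings $GU(n,q)\hookrightarrow GL(n,q^2)\hookrightarrow GL(2n,q)$ is exactly how that reduction is meant. Your write-up just makes explicit the details (including the passage from ``every Jordan block size occurs with even multiplicity'' to ``even multiplicity'') that the paper leaves implicit.
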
 
 
This leads to the following theorem.

\begin{theorem} The proportion of elements of $Sp(2n,q)$ which are contained in a conjugate of $GU(n,q).2$
is at most \[ \frac{A [\log_q(4n)+4]^3}{n^{1/4}}, \] where $A$ is a universal constant.
\end{theorem}

\begin{proof} The proportion of elements of $Sp(2n,q)$ which are contained in a conjugate of $GU(n,q).2$ outside
of $GU(n,q)$ is studied in \cite{FG2} and tends to zero much faster than the needed rate.
To study the proportion of elements of $Sp(2n,q)$ contained in a conjugate of $GU(n,q)$, note by Lemmas
\ref{boundpolySpeven} and \ref{USPcond} that the number of possible characteristic polynomials of such elements is at most $C q^n/n^{1/4}$ for
a universal constant $C$. Now apply Theorem \ref{sym}.
\end{proof}

An identical argument proves the following theorem. Note that there is an extra factor of $q$ in the numerator, since
the orthogonal version of Theorem \ref{sym} (given in Subsection \ref{subO}) has an extra $q$ in the numerator.   This can be dealt
with as in the previous subsection.  

\begin{theorem} \label{really} The proportion of elements of $O^{\pm}(2n,q)$ which are contained in a conjugate of $U(n,q).2$
is at most \[ \frac{A q [\log_q(4n)+4]^3}{n^{1/4}}, \] where $A$ is a universal constant.
\end{theorem}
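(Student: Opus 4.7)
The plan is to mirror, essentially word for word, the proof of the preceding theorem (the one for $GU(n,q).2$ inside $Sp(2n,q)$), replacing Theorem \ref{sym} by the orthogonal analogues (Theorems \ref{ofirst} and \ref{ofirst2}) and invoking Subsection \ref{extO} for the outer coset. So I would split the set of elements of $O^{\pm}(2n,q)$ lying in a conjugate of $U(n,q).2$ into two pieces: those in a conjugate of $U(n,q)$ itself, and those in the nontrivial outer coset $U(n,q).2 \setminus U(n,q)$.

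The outer coset piece is already handled: the Shintani descent / outer-coset analysis from \cite{FG2}, together with the case analysis at the end of Subsection \ref{extO} describing how $O^{\epsilon}(n,q) \setminus SO^{\epsilon}(n,q)$ behaves as $q$ grows (the restriction that $(z-1)^a(z+1)^b$ divides the characteristic polynomial with $ab$ odd, and in even characteristic that $(z-1)^2$ occurs precisely to the second power), forces the contribution from this piece to go to $0$ faster than the claimed rate. This step is purely a citation once it is observed that the argument in Subsection \ref{extO} carries through unchanged when the ambient extension-field subgroup $Sp(n,q^2).2$ is replaced by $U(n,q).2$, since in both cases the obstruction is the same parity condition on characteristic polynomial factors.

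For the inner piece, I would establish the analog of Lemma \ref{USPcond}: if $g \in O^{\pm}(2n,q)$ lies in a conjugate of $U(n,q)$, then every irreducible factor of the characteristic polynomial of $g$ either has even degree or occurs with even multiplicity. This follows exactly as Lemma \ref{USPcond} does, by embedding $U(n,q)$ into $GL(n,q^2)$ and therefore into $GL(2n,q)$ and invoking the $GL$ characterization recalled at the start of Subsection \ref{extGL} with $b=2$. Consequently Lemma \ref{boundpolySpeven} applies verbatim to the characteristic polynomials that can arise, giving a count of at most $Cq^n/n^{1/3}$ for a universal constant $C$.

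Finally, I would combine this count with the orthogonal characteristic-polynomial bound, which is Theorem \ref{ofirst} in odd characteristic and Theorem \ref{ofirst2} in even characteristic. Both bounds have denominator $q^{n-1}$ (rather than $q^n$, as in the symplectic case) and numerator of shape $A'[\log_q(4n)+4]^3$ (absorbing the $3/2$-power in odd characteristic into the $3$-power cube for a uniform statement, at worst the loss of a multiplicative constant). Multiplying the $C q^n/n^{1/3}$ characteristic-polynomial count by the per-polynomial probability $A'[\log_q(4n)+4]^3 / q^{n-1}$ gives the claimed $A q [\log_q(4n)+4]^3 / n^{1/3}$, and accounts precisely for the extra factor of $q$ relative to the symplectic theorem. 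The main technical obstacle, such as it is, lies in verifying that the characterization of elements of $O^{\pm}(2n,q)$ in a conjugate of $U(n,q)$ yields exactly the same parity constraint on characteristic-polynomial factors as in the symplectic-in-unitary case; once that is in hand, the bound is a routine combination of Lemma \ref{boundpolySpeven} with Theorems \ref{ofirst} and \ref{ofirst2}.
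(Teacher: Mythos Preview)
Your proposal is correct and matches the paper's own proof, which simply says ``an identical argument proves the following theorem'' (referring to the $GU(n,q).2 \subset Sp(2n,q)$ case) and notes that the extra factor of $q$ arises from the $q^{n-1}$ denominator in Theorems \ref{ofirst} and \ref{ofirst2}, to be handled as in Subsection \ref{extO}. Your elaboration of the two pieces (outer coset via \cite{FG2} and Subsection \ref{extO}; inner piece via the analogue of Lemma \ref{USPcond}, Lemma \ref{boundpolySpeven}, and the orthogonal characteristic-polynomial bounds) is exactly what the paper intends.
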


\section{Bounded Rank and Lower Bounds} 

Instead of considering characteristic polynomials, we can count the number of elements with a given semisimple part (up to conjugacy).  
For the linear, unitary and symplectic groups, this is the same as having the same characteristic polynomial.  In the orthgonal groups,  these conditions
are not quite equivalent.

Let $G$ be a simply connected simple algebraic group over an algebraically closed field $k$.   
Recall that if $g \in G$,
then $g=su = us $ with $s$ semisimple and $u$ unipotent with $s$ and $u$ unique.
Let $s \in G$ be a fixed semisimple element.   Let $G_s$ be the set of all elements in $G$ whose semisimple
part is conjugate to $s$.  In the case of $SL$ and $Sp$,  this is precisely saying the elements have characteristic 
polynomial (on the natural module) equal to the characteristic polynomial of  $s$.   More generally, this is equivalent to 
saying that the characteristic  polynomials
are the same as for $s$ on any (rational) finite dimensional module.  
One has to modify this for the orthogonal group case (since it is not simply connected). 

We note the following elementary result.

\begin{theorem} \label{irred}  Let $G$ be a simply connected simple algebraic group over an algebraically closed field.
Fix $s \in G$ semisimple. 
\begin{enumerate} 
\item  There is a bijective morphism from $G/C_G(s) \times U_s$ to $G_s$ where $U_s$ is the variety of unipotent elements in $C_G(s)$.
\item $G_s$ is an irreducible subvariety of $G$ of codimension equal to the rank of $G$.
\item  The set of regular elements in $G_s$ is an open dense subvariety of $G_s$.  
\end{enumerate}
\end{theorem}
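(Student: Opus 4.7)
The plan is to set up the bijective morphism in part (1) via the Jordan decomposition and derive parts (2) and (3) as dimension and openness consequences. A key input throughout is Steinberg's theorem: because $G$ is simply connected, $C_G(s)$ is connected and reductive, of the same rank $r$ as $G$.

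First I would define a morphism $\phi \colon G \times U_s \to G_s$ by $\phi(h,v) = h(sv)h^{-1}$. Because $v \in U_s$ commutes with $s$, the element $hsvh^{-1}$ has Jordan decomposition $(hsh^{-1})(hvh^{-1})$, whose semisimple part is conjugate to $s$, so $\phi$ lands in $G_s$. Surjectivity follows from the uniqueness of Jordan decomposition: if $g \in G_s$ factors as $s_0 u_0$ with $s_0 = hsh^{-1}$, then $(h, h^{-1}u_0 h) \mapsto g$. Two pairs have the same image iff they lie in a common orbit of the free diagonal action $(h,v)\cdot c = (hc, c^{-1}vc)$ of $C_G(s)$, so $\phi$ factors through the contracted product $G \times^{C_G(s)} U_s$ -- which is the object the statement denotes $G/C_G(s) \times U_s$ -- giving a bijective morphism onto $G_s$.

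For part (2), the unipotent variety of a connected reductive group is irreducible of codimension equal to its rank, so $U_s$ is irreducible with $\dim U_s = \dim C_G(s) - r$. Then $G \times U_s$ is irreducible and hence so is $G_s$ as its surjective image, and
\[ \dim G_s \;=\; \dim G - \dim C_G(s) + \dim U_s \;=\; \dim G - r, \]
which is the claimed codimension.

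For part (3), the regular locus $G^{\mathrm{reg}} \subseteq G$ is open, so $G_s \cap G^{\mathrm{reg}}$ is open in $G_s$. To see that this intersection is nonempty (hence dense by irreducibility of $G_s$), I would pick a regular unipotent $u$ in the connected reductive group $C_G(s)$. Its centralizer in $C_G(s)$ has dimension $r$, and since $C_G(su) = C_{C_G(s)}(u)$, the element $su \in G_s$ has centralizer of dimension $r$ in $G$, so it is regular. The main subtlety is the interpretation of the symbol $G/C_G(s) \times U_s$ in part (1) as the contracted product $G \times^{C_G(s)} U_s$; once that is granted, the rest is a direct application of the Jordan decomposition, Steinberg's connectedness theorem, and standard facts about the unipotent variety and regular unipotent elements.
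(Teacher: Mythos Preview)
Your argument is correct and follows essentially the same route as the paper: construct the map via Jordan decomposition, invoke Steinberg's connectedness theorem to know $C_G(s)$ is connected reductive of rank $r$, use the dimension of the unipotent variety for (2), and exhibit $su$ with $u$ regular unipotent in $C_G(s)$ for (3). Your observation that the domain in (1) must really be the contracted product $G \times^{C_G(s)} U_s$ rather than a literal product is well taken---the paper's map $(gC_G(s),u)\mapsto g^{-1}sug$ is not well-defined on the naive product---and your formulation is the cleaner one; for (3) the paper instead pulls back the open dense regular-unipotent locus in $U_s$ through the bijection, whereas you restrict the open set $G^{\mathrm{reg}}\subset G$ to $G_s$ and check nonemptiness, which is equivalent.
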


\begin{proof}    Consider the map on $G/C_G(s) \times U_s$ sending $gC_G(s), u$  to the element $g^{-1} su g$. 
Clearly $h \in G$ is in $G_s$
if and only if $h$ is conjugate to $su$ with $u$ unipotent in $C_G(s)$.  Thus, this morphism is bijective.      Since $C_G(s)$ is a connected
reductive group (since $G$ is simply connected) of the same rank as $G$, it follows that $U_s$ is the closure of the conjugacy class of regular unipotent
elements in $C_G(s)$ and so has codimension  in $C_G(s)$ equal to the rank of $G$ and so the result follows.

If $u$ is a regular unipotent element in $C_G(s)$, then $su$ is a regular element.  Since the set of regular unipotent elements of  $C_G(s)$ is dense
and open in the set of unipotent elements of $C_G(s)$ the last result follows.  
\end{proof}

It is well known that up to conjugacy there are only finitely many conjugacy classes of centralizers of semisimple elements (and almost all them
have positive dimensional centers) and so one can use this (for fixed $G$) to get exact formulas for the size of $G_s(q)$.  Our main theorem gives an
explicit upper bound.  
This also shows that for $G$ of fixed rank $r$, $|G_s(q)| = q^{d-r} + O(q^{d-r-1})$ where $d = \dim G$.  

We now compute the (exact) lower bound in the case of $G=SL(n,q)$.  Since characteristic polynomials are invariant under conjugation in $GL(n,q)$,
we work in the bigger group.   Since every characteristic polynomial occurs as the characteristic polynomial of a regular element 
(i.e. its centralizer has dimension $n$ or equivalently it is similar to its companion matrix), it suffices to give a lower bound on the size of
the $GL$ class of a regular element. If $x \in GL(n,q)$, then the centralizer of $x$ has size less than $q^n$  (since it is the group of invertible elements
in the algebra generated by $x$ in matrices).  Thus, a lower bound for the number of elements with a given characteristic polynomial is 

$$
|GL(n,q)|/(q^n-1) = |GL(n-1,q)|q^{n-1}.
$$

This  lower bound is also immediate from the formula \eqref{starred}.  One can obtain similar lower bounds for the other cases. 

\section{Maximal Tori and Invariable generation}

We let $H$ be a group and for $x,y \in H$, let $y^x = x^{-1}yx$.   Let $x^H$ denote the conjugacy class of $x$ in $H$. 
Recall that we say $g_1, \ldots, g_r$ invariably generate $G$ if and only if $G= \langle g_1^{x_1}, \ldots, g_r^{x_r} \rangle$ for all
$x_i \in G$.   Similarly, we say   $g_1, \ldots, g_r \in  H \subseteq GL(n,q)$ is invariably  irreducible in $H$  if $\langle g_1^{x_1}, \ldots, g_r^{x_r} \rangle$ is irreducible for all $x_i \in H$.   We  similarly define invariably transitive for subgroups of the symmetric group.  

We first recall a result of Steinberg (see \cite[3.4.1]{Ca}).  

\begin{theorem} \label{maximal tori}   Let $G$ be a simply connected simple algebraic group over an algebraically
closed field $k$ in positive characteristic $p$.  Let $F$ be a Lang-Steinberg endomorphism of $G$ (so $G^F$ is finite). 
Let $s \in G^F$ be semisimple.  The number of $F$-stable maximal tori containing $s$ is the number of unipotent elements  in $C_G(s)^F$.
\end{theorem}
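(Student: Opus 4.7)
The plan is to reduce the statement to Steinberg's classical counting formula for $F$-stable maximal tori in a connected reductive group, applied to the centralizer $C_G(s)$.

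First I would establish a bijection between the $F$-stable maximal tori of $G$ containing $s$ and the $F$-stable maximal tori of $C_G(s)^\circ$. If $T$ is a maximal torus of $G$ with $s \in T$, then $T$ centralizes $s$ and, being connected, lies in $C_G(s)^\circ$. Conversely, since $s$ is semisimple it is contained in some maximal torus of $G$, so $\mathrm{rank}(C_G(s)^\circ) = \mathrm{rank}(G)$, and every maximal torus of $C_G(s)^\circ$ is a maximal torus of $G$; it automatically contains $s$ because $s$ lies in the center of $C_G(s)^\circ$, and the center of a connected reductive group is contained in every maximal torus. This bijection is clearly compatible with $F$-stability.

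Next, since $G$ is simply connected, Steinberg's connectedness theorem for centralizers of semisimple elements gives $C_G(s) = C_G(s)^\circ$. Writing $H := C_G(s)$, we have a connected reductive group on which $F$ restricts to a Lang-Steinberg endomorphism. Moreover, every unipotent element of $H^F$ lies in the identity component of $H$ (since the Zariski closure of the group generated by a unipotent element is connected), so the count on the right-hand side of the theorem is the same whether we interpret it inside $C_G(s)^F$ or $(C_G(s)^\circ)^F$.

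The proof then concludes by invoking Steinberg's formula \cite[3.4.1]{Ca}: in any connected reductive group $H$ with Lang-Steinberg endomorphism $F$, the number of $F$-stable maximal tori of $H$ equals the number of unipotent elements of $H^F$. This classical result is the only nontrivial input and is the main obstacle; its proof uses Lang's theorem to parametrize $H^F$-conjugacy classes of $F$-stable maximal tori by $F$-conjugacy classes in the Weyl group, and then matches this with a parallel expression for the unipotent count. I would quote this result directly from Carter rather than reprove it.
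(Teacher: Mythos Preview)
Your proof is correct and follows essentially the same approach as the paper: reduce to Steinberg's classical count (the $s=1$ case) applied to $H=C_G(s)$, using that $C_G(s)$ is connected reductive because $G$ is simply connected, and that the maximal tori of $G$ containing $s$ are exactly the maximal tori of $C_G(s)$. The paper's proof is terser but identical in substance; your additional remarks (e.g., that unipotent elements automatically lie in the identity component) are correct but superfluous once connectedness of $C_G(s)$ is established.
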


\begin{proof}  Steinberg proves this for $s=1$ (for any reductive group).   If $G$ is simply connected, then 
$C_G(s)$ is 
a connected reductive group and clearly any $F$-stable maximal torus containing $s$ is an $F$-stable maximal torus
of $C_G(s)$.
\end{proof}

Let $\mathcal{T}$ denote the set of $F$-stable maximal tori of $G$ and set $\Omega =\{T^F | T \in \mathcal{T} \}$. 
We view the elements of $\Omega$ as disjoint sets (i.e. we can rename them with sets of the same size but disjoint).

The previous result allows one to define a bijection $\phi$ between $G^F$ and the (disjoint) union of $T^F$  (by choosing
any bijection between elements in $G^F$ with fixed semisimple part $s$ and the $F$-stable maximal
tori containing $s$).

We now restrict our attention to  $SL(n,q)$.  The $G^F$ conjugacy classes of $F$-stable maximal tori are in bijection with
the conjugacy classes of   $W=S_n$, the Weyl group of $G$.   Let $g = su=us  \in SL(n,q)$ with $s$ semisimple and $u$ unipotent.
Note all such elements have the same characteristic polynomial $f_s(x)$.   Also note that $g$ leaves invariant some $e$-dimensional subspace
if and only if $s$ does  if and only if $f_s$ has an $e$-dimensional factor  (not necessarily irreducible).   Note that if $s$ is in a conjugate
of  $T_w$ and $s$ leaves invariant no $e$-dimensional subspace, then $w$ leaves invariant no subset of size $e$.   

This shows that if $X_e$ is  the set of elements of $SL(n,q)$ which do not leave invariant any subspace of dimension $e$, then
$|X_e| < \sum |T^F|$ where the sum is over all $T \in \mathcal{T}$  conjugate to some $T_w$ with $w$ leaving invariant no subset of size $e$.
In particular, we can now show:

\begin{theorem}  Let $e$ be a positive integer less than $n$.  The probability that a random element of 
$SL(n,q)$ (or $GL(n,q)$) leaves no $e$-dimensional subspace invariant  is
less than the probability that a random element of $S_n$ leaves no subset of size $e$ invariant. 
\end{theorem}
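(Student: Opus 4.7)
The plan is to take the inequality
\[
|X_e| < \sum_{T \sim T_w,\ w \text{ has no size-}e \text{ invariant subset}} |T^F|
\]
that the text has just deduced (from Steinberg's Theorem \ref{maximal tori} and the bijection $\phi$) and convert its right-hand side into the stated $S_n$-probability via a standard class-counting identity. The rest is essentially orbit-counting in the Weyl group $W = S_n$.

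First, I would group the $F$-stable maximal tori by $W$-conjugacy class. For each $[w] \subseteq W$, the torus $T_w$ has $|G^F|/|N_{G^F}(T_w)|$ many $G^F$-conjugates, and the standard formula $|N_{G^F}(T_w)|/|T_w^F| = |C_W(w)|$ gives
\[
\sum_{T \sim T_w} |T^F| \;=\; \frac{|G^F|}{|C_W(w)|}.
\]
Second, summing over those classes $[w]$ with $w$ leaving no invariant subset of size $e$, and using $|[w]| = |W|/|C_W(w)|$, yields
\[
\sum_{[w]\text{ good}} \frac{|G^F|}{|C_W(w)|} \;=\; \frac{|G^F|}{|W|} \cdot \bigl|\{w \in W : w \text{ has no size-}e \text{ invariant subset}\}\bigr|.
\]
Dividing by $|G^F|$ and combining with the strict inequality above gives the theorem.

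The only subtle point, which the text has already handled, is the link between $g = su$ leaving no $e$-dimensional subspace invariant and the $W$-class $[w]$ containing no $w$-invariant subset of size $e$. The argument is that the cycles of $w$ have lengths of the form $d_i \lambda^{(i)}_j$, where the $d_i$ are the degrees of the irreducible factors of $f_s$ and $\lambda^{(i)} \vdash m_i$ is a partition of the multiplicity $m_i$; any $w$-invariant subset is a union of cycles, whose size $\sum_i d_i k_i$ would produce a factor of $f_s$ of degree $e$. Were I proving this from scratch, this step---matching the centralizer structure in $GL_n$ with the cycle structure of $w$---would be the main obstacle, requiring the standard description of $F$-stable maximal tori of the centralizer of a semisimple element (which is valid here because $C_G(s)$ is connected in $GL_n$ and $SL_n$). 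The strict inequality is then immediate: every $T_w^F$ contains the identity, whose $\phi$-preimage is a unipotent element possessing an invariant flag and so lying outside $X_e$.
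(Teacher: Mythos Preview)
Your proposal is correct and follows essentially the same approach as the paper: both compute $n_w |T_w^F|/|G^F| = 1/|C_W(w)| = |w^W|/|W|$ via the identity $|N_{G^F}(T_w)|/|T_w^F| = |C_W(w)|$, then sum over the relevant Weyl-group classes to convert the torus-side inequality (already established before the theorem) into the $S_n$-probability. Your closing discussion of the link between invariant subspaces of $g$ and invariant subsets of $w$, and of the strictness via the identity element, matches the paper's remarks surrounding the proof.
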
  

\begin{proof} Let $n_w$ denote the number of $G^F$ conjugates of $T_w$. Then $n_w$ is equal to
\[ \frac{|G^F|}{|N_{G^F}(T_w)|} = \frac{|G^F|/|C_{G^F}(T_w)|} {|N_{G^F}(T_w)|/|C_{G^F}(T_w)|},\]
where $N$ denotes normalizer and $C$ denotes centralizer. Now the numerator is equal to $|G^F|/|T_w^F|$ and
the denominator is equal to $|C_W(w)|$. Thus \[ \frac{1}{|G^F|} n_w |T_w^F| = \frac{1}{|C_W(w)|} = \frac{|w^W|}{|W|}.\]
(See \cite{FG1.5} where this argument was used extensively).

Summing over all $w$ that do not leave invariant any subset of size $e$
 shows that the probability that a random element of $SL(n,q)$ leaves invariant some $e$-dimensional subspace is less than
the probability that a random element of $W$ leaves invariant a subset of size $e$. The inequality is strict since any maximal torus contains the identity.
\end{proof} 

Similarly, we can replace $e$ by any collection of integers between $1$ and $n-1$ (in particular the entire set). 

The same argument applies to $r$-tuples of elements in $G^F$ and $r$-tuples of $F$-stable maximal tori for any positive integer $r$.
   In particular, this gives:

\begin{theorem}  The probability that $r$ random elements of $SL(n,q)$ invariably generate an irreducible subgroup is less than the probability that 
$r$ random elements of $W$ invariably generate a transitive subgroup, 
\end{theorem}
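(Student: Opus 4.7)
The plan is to extend the coupling used in the preceding theorem from single elements to $r$-tuples. First I would invoke Theorem \ref{maximal tori} to set up the bijection $\phi \colon G^F \to \bigsqcup_{T \in \mathcal{T}} T^F$ for $G = SL_n$, sending $g = su$ to one of the $F$-stable maximal tori containing its semisimple part $s$. Composing with the natural surjection onto $W$-conjugacy classes gives $\psi \colon G^F \to \{w^W : w \in W\}$, and the probability computation from the preceding proof yields
\[
\mathbb{P}_{g \in G^F}\bigl[\psi(g) = w^W\bigr] \;=\; \frac{n_w\,|T_w^F|}{|G^F|} \;=\; \frac{|w^W|}{|W|}.
\]
Applied coordinate-wise, $\psi$ pushes a tuple of $r$ uniform independent elements of $G^F$ forward to the distribution of $r$ uniform independent $W$-conjugacy classes in $W$.

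The crux is to show: if $(g_1, \ldots, g_r)$ invariably generates an irreducible subgroup of $SL(n,q)$, then for any representatives $w_i$ of $\psi(g_i)$, the tuple $(w_1, \ldots, w_r)$ invariably generates a transitive subgroup of $S_n$. I would prove the contrapositive. Suppose there exist $\sigma_i \in W$ and a common $w_i^{\sigma_i}$-invariant subset $A \subseteq \{1, \ldots, n\}$ of size $e$ with $1 \le e \le n-1$. Since cycle structure is a conjugation invariant, the cycles of each $w_i$ partition into a sub-collection of total length $e$ and its complement. Using the torus parameterization $T_{w_i}(\mathbb{F}_q) \cong \prod_j \mathbb{F}_{q^{k_j}}^*$, where $k_j$ runs over the cycle lengths of $w_i$, the characteristic polynomial $f_i$ of the semisimple part of $g_i$, and hence of $g_i$ itself, factors in $\mathbb{F}_q[z]$ as $\prod_j h_{ij}$ with $\deg h_{ij} = k_j$; restricting to the cycles in $A$ produces an $\mathbb{F}_q$-factor of $f_i$ of degree $e$. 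As remarked just before the preceding theorem, an $e$-dimensional factor of $f_i$ over $\mathbb{F}_q$ forces $g_i$ to preserve an $\mathbb{F}_q$-subspace $V_i$ of dimension $e$ (the unipotent part of $g_i$ commutes with the semisimple part and so preserves each generalized eigenspace). Transitivity of $SL(n,q)$ on $e$-dimensional $\mathbb{F}_q$-subspaces lets me choose $x_i \in SL(n,q)$ with $x_i V = V_i$ for a fixed reference $V$, and then every $g_i^{x_i}$ preserves $V$, contradicting invariable irreducibility.

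Granted this implication, summing $\prod_i |w_i^W|/|W|$ over $r$-tuples of $W$-conjugacy classes that fail to invariably generate transitively gives an upper bound for the probability that a uniform random $r$-tuple in $SL(n,q)$ fails to be invariably irreducible; complementation gives the desired inequality. The main obstacle is the translation step: converting a combinatorial common invariant subset on the Weyl side into an $\mathbb{F}_q$-rational common invariant subspace of the correct dimension on the $SL$ side, via the torus-based factorization of the characteristic polynomial together with $SL$'s transitivity on subspaces. Strictness follows as in the preceding theorem: for example, taking each $w_i$ to be an $n$-cycle (so the Weyl tuple invariably generates transitively) while arranging each semisimple part with eigenvalues in a common proper subfield $\mathbb{F}_{q^d} \subsetneq \mathbb{F}_{q^n}$ for some $d\mid n$ produces $r$-tuples in $SL(n,q)$ that are not invariably irreducible yet whose image under $\psi$ is invariably transitive.
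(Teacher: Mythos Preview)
Your approach is essentially the paper's: extend the bijection $\phi$ and the torus-counting identity $n_w|T_w^F|/|G^F|=|w^W|/|W|$ from single elements to $r$-tuples, and use the implication (via the cycle decomposition of $w$ and the corresponding $\mathbb{F}_q$-factorisation of the characteristic polynomial) that a common invariant $e$-subset on the Weyl side forces a common invariant $e$-space on the $SL$ side. The paper simply says ``the same argument applies to $r$-tuples,'' so you have written out what the paper leaves implicit.

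One slip to fix in your final paragraph: the direction of the inequality is reversed. From the implication you proved (contrapositively: $(\psi(g_i))$ not invariably transitive $\Rightarrow (g_i)$ not invariably irreducible), the preimage under $\psi^r$ of the failing Weyl tuples sits \emph{inside} the set of failing $SL$ tuples, so summing $\prod_i |w_i^W|/|W|$ over failing Weyl tuples yields a \emph{lower} bound for the probability that a random $SL$-tuple fails, not an upper bound. Complementing that lower bound then gives the desired $P_{SL}(\text{inv.\ irred.}) < P_W(\text{inv.\ trans.})$. Equivalently and more directly, you can sum over \emph{successful} Weyl tuples and get the upper bound for $P_{SL}(\text{inv.\ irred.})$ without any complementation. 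Your strictness example is correct, though the paper's observation that the identity lies in every maximal torus (so there are reducible $g$ with $\psi(g)$ the $n$-cycle class) gives the same conclusion with less effort.
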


If we take $r=3$, then it follows by \cite{EFG, PPR} that the limit as $n \rightarrow \infty$ of the probability that $3$ random elements of $W$ invariably  generate  a transitive subgroup
goes to $0$ and so we obtain:

\begin{cor} \label{McK}  The probability that $3$ random elements of $SL(n,q)$ invariably generate an irreducible subgroup 
goes to $0$ as $n$ goes to $\infty$ (independently of $q$).  
\end{cor}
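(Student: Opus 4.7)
The plan is to deduce Corollary \ref{McK} directly from the immediately preceding theorem together with the cited combinatorial result on the symmetric group. The key point is that the reduction from the linear group to the Weyl group has already been established in the preceding theorem (stated for general $r$), so for Corollary \ref{McK} all that remains is to plug in $r=3$ and invoke the appropriate result about $S_n$.

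First I would recall the inequality just proved: for each positive integer $r$, the probability that $r$ random elements of $SL(n,q)$ invariably generate an irreducible subgroup is bounded above by the probability that $r$ random elements of the Weyl group $W=S_n$ invariably generate a transitive subgroup. Crucially, this bound is independent of $q$. Specializing to $r=3$, it suffices to show that the probability that three random permutations of $S_n$ invariably generate a transitive subgroup tends to $0$ as $n\to\infty$.

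Second, I would cite \cite{EFG, PPR}, which establishes exactly this fact for $S_n$: the probability that three uniformly random elements of $S_n$ invariably generate a transitive subgroup goes to $0$ as $n\to\infty$. Combining these two statements yields the corollary, since the $SL(n,q)$-probability is dominated by a quantity independent of $q$ that itself tends to $0$.

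There is essentially no obstacle here beyond assembling the pieces, since the serious work has already been done: the derangement/extension-field estimates of Section \ref{extension} and the Steinberg torus-counting argument are what make the preceding theorem possible, and the symmetric-group estimate is imported from the literature. The only thing to be slightly careful about is that the bound from the preceding theorem is genuinely uniform in $q$, so the conclusion really is independent of $q$; this is visible from the proof, which expresses $n_w |T_w^F|/|G^F|$ as $|w^W|/|W|$, a quantity with no dependence on $q$ whatsoever.
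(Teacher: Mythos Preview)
Your proposal is correct and matches the paper's own argument essentially verbatim: specialize the preceding theorem to $r=3$ and invoke \cite{EFG, PPR} for the symmetric-group fact, noting that the bound is uniform in $q$. One small inaccuracy in your commentary: the derangement/extension-field estimates of Section~\ref{extension} play no role in the preceding theorem or this corollary---only the Steinberg torus-counting argument is used here.
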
  

We note that McKemmie \cite{Mc} proved the previous result under the assumption that both $n$ and $q$ approach $\infty$.   She also proved that there exists a fixed $q_0$ so that if $q > q_0$, then the probability that $4$ random elements of a finite simple group of  Lie type of rank $r$  over the field of
$q$ elements invariably generate an irreducible subgroup is bounded away from $0$.  
   Unfortunately, the inequality above goes the wrong way and so the argument above does not prove (for $r=4$) that McKemmie's result holds for all $q$.

This proof does not immediately extend to the other classical groups.  The main issues are that there are different types of subspaces of each dimension
and the connection between the dimensions of nondegenerate subspaces fixed by $g$ and its semisimple part is complicated.


\begin{thebibliography}{AAAA}

\bibitem{Ca}   Carter, R. W.,
Finite groups of Lie type.
Conjugacy classes and complex characters. Reprint of the 1985 original
Wiley Classics Lib., Wiley-Intersci. Publ., 
John Wiley \& Sons, Ltd., Chichester, 1993.

\bibitem{EFG}  Eberhard, S., Ford, K. and Green, B.,  Invariable generation of the symmetric group,
{\it Duke Math. J.} {\bf 166} (2017), 1573--1590.

\bibitem{EG} Eberhard, S. and Garzoni, D., Conjugacy classes of derangements in finite groups of Lie type,
{\it Transac. Amer. Math. Soc.} {\bf 12} (2025), 536-575.

\bibitem{F} Fulman, J., Cycle indices for the finite classical groups, {\it J. Group Theory} {\bf 2} (1999), 251-289.

\bibitem{FGG}  Fulman, J.,  Garzoni, D. and Guralnick, R.,  Probabilistic generation of finite almost simple groups,
{\it Adv. Math.} {\bf 452} (2024), Paper No. 109796, 33 pp.

\bibitem{FG1} Fulman, J. and Guralnick, R., Bounds on the number and sizes of conjugacy
classes in finite Chevalley groups with applications to derangements, {\it Transac. Amer.
Math. Soc.} {\bf 364} (2012), 3023-3070.

\bibitem{FG1.5}   Fulman, J. and Guralnick, R.,   Derangements in subspace actions of finite classical groups,
{\it Transac. Amer. Math. Soc.} {\bf 369} (2017),  2521--2572.

\bibitem{FG2} Fulman, J. and Guralnick, R., Derangements in finite classical groups for actions related to
extension field and imprimitive subgroups and the solution of the Boston-Shalev conjecture, {\it Transac. Amer. Math. Soc.}
{\bf 370} (2018), 4601-4622.

\bibitem{FG3} Fulman, J. and Guralnick, R., The probability of generating an irreducible subgroup, preprint.

\bibitem{FNP} Fulman, J., Neumann, P. and Praeger, C., A generating function approach to the enumeration of matrices
in classical groups over finite fields, {\it Mem. Amer. Math. Soc.} {\bf 176} (2005), no. 830, vi+99 pp.

\bibitem{GMc}  Garzoni, D. and McKemmie, E.,  On the probability of generating invariably a finite simple group, 
{\it J. Pure Appl. Algebra} {\bf 227} (2023), no. 6, Paper No. 107284, 37 pp.

\bibitem{GR} Gorodetsky, O. and Rodgers, B., Traces of powers of matrices over finite fields, {\it Transac. Amer. Math.
Soc.} {\bf 374} (2021), 4579-4638.

\bibitem{GLSS}  Guralnick, R,  Liebeck, M,  Saxl, J. and Shalev, A., 
Random generation of finite simple groups, 
{\it J. Algebra}  {\bf 219} (1999),  345--355.

\bibitem{GM}  Guralnick R.  and  Malle, G.,  Simple groups admit Beauville structures, {\it J. Lond. Math. Soc. (2)} {\bf 85} (2012), 694--721.

\bibitem{HS} Hansen, J. and Schmutz, E., How random is the characteristic polynomial of a random matrix?,
{\it Math. Proc. Cambridge Philos. Soc.} {\bf 114} (1993), 507--515.

\bibitem{LP}   \L uczak, T. and Pyber, L., 
 On random generation of the symmetric group, 
{\it Combin. Probab. Comput.} {\bf 2} (1993) 505--512.

\bibitem{Mc}  McKemmie, E.,  Invariable generation of finite classical groups, {\it J. Algebra} {\bf 585} (2021) 592--615.

\bibitem{PPR} Pemantle, R.,  Peres, Y., and Rivin, I.,    Four random permutations conjugated by an adversary generate $S_n$ with high probability, 
{\it Random Struc. Algor.} {\bf 49} (2016), 409--428.

\bibitem{R} Reiner, I., On the number of matrices with given characteristic polynomial,
{\it Illinois J. Math.} {\bf 5} (1961), 324-329.

\bibitem{Sc} Schmutz, E., The order of a typical matrix with entries in a finite field, {\it Israel J. Math.} {\bf 91}
(1995), 349-371.

\bibitem{Sh} Shalev, A., A theorem on random matrices and some applications, {\it J. Algebra} {\bf 199}
(1998), 124-141.

\end{thebibliography}
\end{document}